\newcommand{\odd}{\text{ odd}}
\newcommand{\even}{\text{ even}}
\newcommand{\mainerror}{O_{\underline{n},\underline{m},k}\left( \frac{(\log q)^{2n}}{\sqrt{q}} \right)}
\newcommand{\prob}{\textbf{Prob}}
\newcommand*{\ndiv}{\not|}
\newtheorem{theorem}{Theorem}[section]
\newtheorem{lemma}[theorem]{Lemma}
\newtheorem{proposition}[theorem]{Proposition}
\theoremstyle{definition}
\newtheorem{example}[theorem]{Example}
\newtheorem{definition}{Definition}[section]
\theoremstyle{remark}
\newtheorem{remark}{Remark}[section]
\theoremstyle{plain} % just in case the style had changed
\newcommand{\thistheoremname}{}
\newtheorem*{nono-theorem}{\thistheoremname}
\DeclareRobustCommand{\prob}[1][P]{\ensuremath {\mathbb{#1}}}
\newcommand{\Addresses}{{% additional braces for segregating \footnotesize
  \bigskip
  \footnotesize

  A.~Hussain, \textsc{Department of Mathematics, University of Bristol,
    United Kingdom}\par\nopagebreak
  \textit{E-mail address}, \texttt{ayesha.hussain@bristol.ac.uk}
        
        }}
\begin{document}

\title{The Limiting Distribution of Character Sums}
\author{Ayesha Hussain}
\date{6th July 2021}

\begin{abstract}
        In this paper, we consider the distribution of the continuous paths of Dirichlet character sums modulo prime $q$ on the complex plane. We also find a limiting distribution as $q \rightarrow \infty$ using Steinhaus random multiplicative functions, stating properties of this random process. This is motivated by Kowalski and Sawin's work on Kloosterman paths. 
\end{abstract}

\maketitle

\section{Introduction}

     Given a primitive Dirichlet character $\chi$ modulo $q$, we define the normalised partial character sum
     \begin{align*}
        S_\chi(t) \coloneqq \frac{1}{\sqrt{q}} \sum_{n \leq qt} \chi(n),
    \end{align*}
    for $t \in [0,1]$. Such character sums play a fundamental role in analytic number theory. Our goal is to study the distribution of character sums for prime modulus $q$, and find the limiting distribution as $q \rightarrow \infty$, answering the open problem set by Kowalski and Sawin \cite[Section 5.2]{kowalskisawin2016}.
    
    When investigating the maximum of these character sums, Bober, Goldmakher, Granville and Koukoulopoulos \cite{bggk2014} studied the distribution function for $\tau > 0$,
     \begin{align}\label{eq: phi probability from 4 author paper}
         \Phi_q(\tau) \coloneqq \frac{1}{\phi(q)} \# \bigg \{ \chi \mod q : \max_t |S_\chi(t)| > \frac{e^\gamma}{\pi}\tau \bigg \}.
     \end{align}
     The limiting distribution of $\Phi_q$ is
     \begin{align*}
         \Phi(\tau) \coloneqq \mathbb{P}\left(\max_t |F(t)| > 2e^\gamma \tau \right),
     \end{align*}
     where $F(t)$ is a random Fourier series defined later in this paper \cite[Theorem 1.4]{bggk2014}. We find, through different methods, that the limiting distribution of character sums, not just their maxima, uses the same random series. Our main theorem can also be used to recover their result.
     
    The character sum $S_\chi(t)$ is a step function, with jump discontinuities at every $t \in \frac{1}{q}\mathbb{Z}$. In order to circumvent the difficulties posed by these discontinuities, it is natural to consider a continuous modification, where the steps are replaced by straight line interpolations. 
    \begin{definition}
    Character paths are paths in the complex plane formed by drawing a straight line between the successive partial sums
    \begin{align*}
        \begin{array}{cc}
             S_\chi(x) = \frac{1}{\sqrt{q}} \sum_{n \leq qx} \chi(n),
             &  S_\chi(x + 1/q) = \frac{1}{\sqrt{q}} \sum_{n \leq qx + 1} \chi(n),
        \end{array}
    \end{align*}
    for $x \in [0,1)$ and $qx \in \mathbb{Z}$. 
     We parameterise character paths by the function
    \begin{align*}
        f_\chi(t) \coloneqq S_\chi(t) + \frac{\{qt\}}{\sqrt{q}} \chi\left(\lceil qt\rceil \right),
    \end{align*}
    where $\{x\}$ is the fractional part of the number $x$. 
    \end{definition}
    Note the difference between character sums and character paths is bounded by $\frac{1}{\sqrt{q}}$. Character paths, like character sums, are periodic and have the truncated Fourier series
    \begin{align}\label{eq: fourier series of character sum}
        \frac{\tau(\chi)}{2 \pi i \sqrt{q}} \sum_{0 < |k| < q} \frac{\overline{\chi}(k)}{k} (1 - e(-kt)) + O \left( \frac{\log q}{\sqrt{q}}\right).
    \end{align}
    Character paths are polygonal, continuous and closed. Examples of character paths can be seen in Figure \ref{fig: mod10007}.
    
    \begin{figure} 
        \centering
        \begin{tabular}{cc}
            \includegraphics[width=7cm]{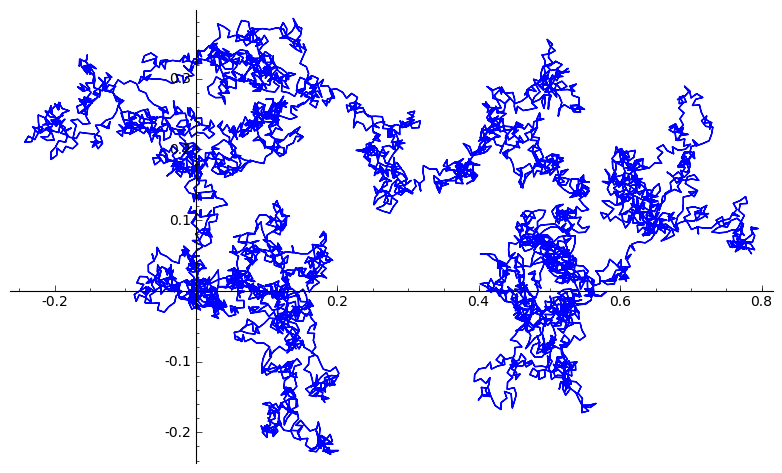} &\includegraphics[width=5cm]{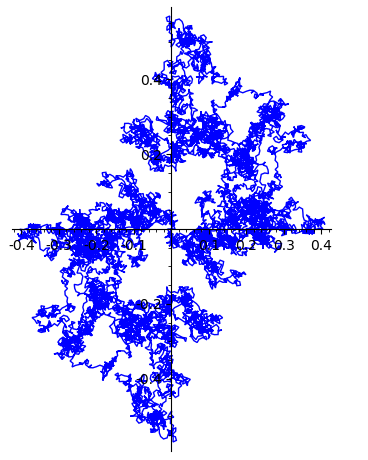} \\
            \small{Odd path defined by $\chi(5) = e\left( \frac{1}{10007} \right)$.} &
            \small{Even path defined by $\chi(5) = e\left( \frac{2}{10007} \right)$.}
        \end{tabular}
        \caption{Character paths of an odd and even character modulo $10007$.}
        \label{fig: mod10007}
    \end{figure}
    
     We define the distribution of character paths by taking $\chi \text{ mod } q \mapsto f_\chi(t)$ as a random process, choosing $\chi$ uniformly at random. Let us write $\mathcal{F}_q$ for this random process,
     \begin{align*}
            \mathcal{F}_{q}(t)\coloneqq \{ f_\chi(t) : \chi \text{ mod } q\}.
    \end{align*}
    One of the main goals of this paper is to find the limiting distribution of the sequence $(\mathcal{F}_{q})_{q}$ as $q \rightarrow \infty$ through the primes. For this, we need Steinhaus random multiplicative functions.
    \begin{definition}\label{def: Steinhaus rmf} 
        \begin{enumerate}
            \item  Steinhaus random variables $X_p$ are random variables uniformly distributed on the unit circle $\{|z| = 1\}$. 
            \item Steinhaus random multiplicative functions $X_n$, $n \in \mathbb{N}$, are defined as
        \begin{align*}
            X_n = \prod_{p^a \| n} X_p^a,
        \end{align*}
        where $X_p$ are Steinhaus random variables for prime $p$. We extend this definition to $n \in \mathbb{Z}$ by taking $X_{-1} = \pm 1$ with probability $1/2$ each, so $X_{-n} = X_{-1}X_n$.
        (Here $p^a \| n$ means $p^a$ strictly divides $n$, so $p^a|n$ but $p^{a+1}\not| n$).
        \end{enumerate}
    \end{definition}
    
       Steinhaus random multiplicative functions are completely multiplicative, with all values distributed on the unit circle. This leads to a natural question: can we compare partial sums of characters with partial sums of Steinhaus random multiplicative functions, assuming similar behaviour?
       Sums of Steinhaus random multiplicative functions might be a good model for short character sums, but the periodicity of the characters means that for long character sums the model fails. Instead, we consider $X_n$ as Fourier coefficients. The sums of Steinhaus random multiplicative functions have a long history. See Harper \cite{harperRMFI2017} for an example of recent work on this.
    
    Consequently, we must find a way to incorporate the periodicity from the character sums.
    Let $F(t)$ be the random Fourier series
    \begin{align} \label{eq: random F plus and minus}
            F(t) \coloneqq \frac{\eta}{2\pi} \sum_{|k| > 0} \frac{X_k}{k}( 1 - e(kt)), 
    \end{align}
    where $X_k$ are Steinhaus random multiplicative functions for $k \neq 0$ and $\eta$ is a random variable uniformly distributed on the unit circle. Additionally, take $F_{\pm}(t)$ where we fix $X_{-1}$ as $+1$ or $-1$. The infinite series is well defined, and we show in Section \ref{s: Properties of F} that this is almost surely the Fourier series of a continuous function. Therefore we can think of $F$ as a random process on $C([0,1])$. Examples of the paths formed by $F_{\pm}$ are shown in Figure \ref{fig: random10007}.
        
    \begin{figure} 
        \centering
        \begin{tabular}{cc}
            \includegraphics[scale=.35]{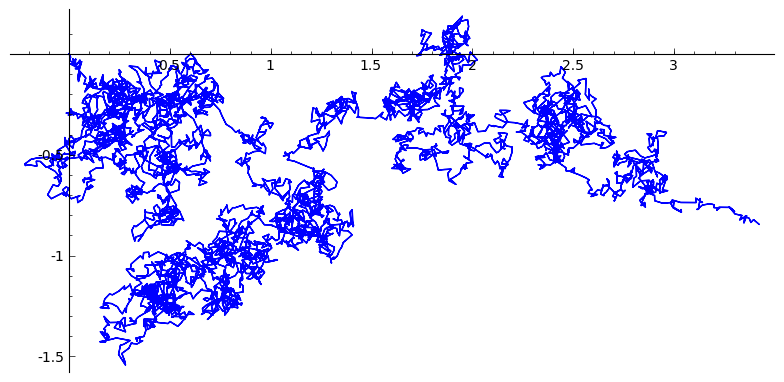} & \includegraphics[scale=.3]{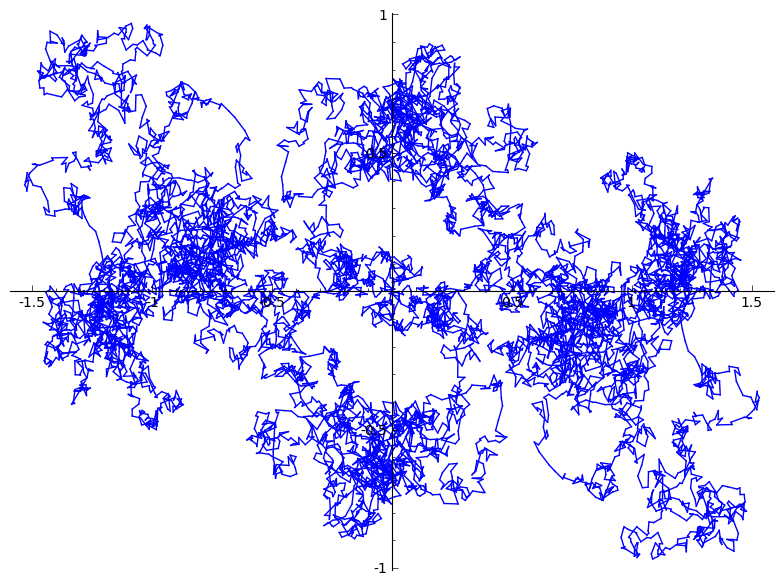}\\
            \small{A sample of $F_-(t)$.} &
            \small{A sample of $F_+(t)$.}
        \end{tabular}
        \caption{Samples of $F_{\pm}$ with $10007$ terms.}
        \label{fig: random10007}
    \end{figure}
    
    Using the random Fourier series $F$, we state the main theorem of the paper:
    \begin{theorem}\label{thm: main theorem}
        Let $F_\pm$ be defined as above for $t \in [0,1]$. The sequence of the distributions of character paths $(\mathcal{F}_{q,\pm}(t))_{q}$ weakly converges to the process $F_{\pm}(t)$ in the Banach space $C([0,1])$ as $q \rightarrow \infty$ through the primes.
        In other words, for any continuous and bounded map
        \begin{align*}
            \psi: C([0,1]) \rightarrow \mathbb{C},
        \end{align*}
        we have, for prime $q$,
        \begin{align*}
            \lim_{\substack{q \rightarrow \infty}} \mathbb{E}\left(\psi\left(\mathcal{F}_{q,\pm}\right)\right) = \mathbb{E}\left( \psi(F_\pm) \right).
        \end{align*}
    \end{theorem}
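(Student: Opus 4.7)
The plan is to prove weak convergence of $\mathcal{F}_{q,\pm}$ to $F_\pm$ in $C([0,1])$ via the standard functional two-step strategy: verify convergence of all finite-dimensional marginals and then establish tightness of the sequence $(\mathcal{F}_{q,\pm})_q$ in $C([0,1])$. Together with Prokhorov's theorem, these yield the displayed convergence of $\mathbb{E}(\psi(\mathcal{F}_{q,\pm}))$. This is the same high-level route taken by Kowalski and Sawin for Kloosterman paths.

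For the finite-dimensional distributions I would use the method of moments, which suffices here because the Steinhaus multiplicative values are bounded by $1$ and $F$ is almost surely continuous (by the material of Section \ref{s: Properties of F}). Fix $t_1,\dots,t_r\in[0,1]$ and nonnegative integers $n_1,\dots,n_r,m_1,\dots,m_r$; substituting the truncated Fourier expansion \eqref{eq: fourier series of character sum} into the mixed moment $\mathbb{E}_\chi\bigl(\prod_j f_\chi(t_j)^{n_j}\overline{f_\chi(t_j)}^{m_j}\bigr)$ and multiplying out yields a sum, over index tuples, of terms of the shape
\[
\frac{\tau(\chi)^N\overline{\tau(\chi)}^M}{q^{(N+M)/2}}\cdot\frac{\overline{\chi}(k_1\cdots k_N)\chi(\ell_1\cdots\ell_M)}{k_1\cdots k_N\,\ell_1\cdots\ell_M},
\]
with $N=\sum n_j$, $M=\sum m_j$, weighted by explicit trigonometric factors in the $t_j$. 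Character orthogonality collapses the $\chi$-average to the congruence $k_1\cdots k_N\equiv \ell_1\cdots\ell_M\pmod q$, while the unimodular Gauss-sum factor equidistributes on the unit circle as $\chi$ varies and supplies in the limit $\eta^N\overline\eta^M$, forcing $N=M$. For indices bounded relative to $q$ the congruence becomes honest equality, reproducing exactly the Steinhaus moment $\mathbb{E}(X_{k_1}\cdots \overline{X_{\ell_M}})$; restricting to the even, resp.\ odd, subsequence of characters encodes the conditioning $X_{-1}=\pm 1$. Tail contributions from $|k|$ near $q$ and the $O(\log q/\sqrt q)$ remainder in \eqref{eq: fourier series of character sum} are absorbed by an error of the form $\mainerror$.

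For tightness I would apply the Kolmogorov-Chentsov criterion, reducing the problem to a uniform estimate $\mathbb{E}_\chi|f_\chi(t)-f_\chi(s)|^{2k}\ll |t-s|^{1+\delta}$ for some integer $k\ge 2$ and some $\delta>0$. Writing $f_\chi(t)-f_\chi(s)$ as a short normalised character sum over $n\in(qs,qt]$ and expanding the $2k$-th power, averaging over $\chi$ reduces once more to counting multiplicative coincidences among indices confined to a window of length $q|t-s|$: the diagonal solutions reproduce the variance expected from the random model, while off-diagonal contributions are absorbed by an error of $\mainerror$ type from the same orthogonality analysis.

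The principal obstacle will be this tightness step. For increments on the scale $|t-s|\sim 1/q$ the moment bounds must be sharp at the level of divisor counts: one needs careful control both of the number of solutions to $k_1\cdots k_n=\ell_1\cdots\ell_n$ with variables in a short interval and of the discrepancy between congruence mod $q$ and genuine equality, in order to extract a power of $|t-s|$ strictly above one after absorbing all error terms. The Gauss-sum factor couples the even and odd components but is otherwise passive, so the main technical work will lie in these divisor/moment estimates; once secured, finite-dimensional convergence together with tightness delivers the theorem.
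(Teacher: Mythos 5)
Your high-level plan (finite-dimensional convergence via moments, then tightness, then Prokhorov) is exactly the paper's, and your sketch of the finite-dimensional step matches the paper's treatment in Section \ref{s: proof of main thm} — truncated Fourier expansion, character orthogonality reducing to a congruence, Gauss sums equidistributing to produce $\eta^N\overline\eta^M$, the diagonal $N=M$ reproducing the Steinhaus moment, and the rest going into $\mainerror$. The one caveat there is that the paper also needs to establish that the limiting moment sequence is \emph{determinate} (Carleman's condition, Section \ref{s: method of moments}); you invoke ``method of moments'' without noting this step, which is not automatic for complex moment problems.

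The genuine gap is in your tightness plan. You propose to write $f_\chi(t)-f_\chi(s)$ as a short character sum over $n\in(qs,qt]$, expand the $2k$-th power, and average over $\chi$. This is essentially the Cochrane--Zheng route, which gives bounds of the shape $q^{k-1+\epsilon}+(q|t-s|)^k q^\epsilon$; after normalising by $q^k$ the $|t-s|^k$ term still carries a $q^\epsilon$ factor, and the Kolmogorov--Chentsov criterion requires a constant \emph{independent of $q$} in front of the $|t-s|^{1+\delta}$. The paper explicitly points this out as the reason the short-sum approach is inadequate, and instead works with the Fourier expansion of $f_\chi(t)-f_\chi(s)$, reducing to divisor sums in arithmetic progressions handled via Shiu's theorem, Rankin's trick, and Ramanujan's identity for $\sum d(n)^2/n^s$. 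Moreover, even that estimate alone is not enough: the paper must split into two regimes, using the trivial Lipschitz bound $|f_\chi(t)-f_\chi(s)|\le\sqrt q\,|t-s|$ for $|t-s|<q^{-(1-\epsilon)}$ and the Fourier-based fourth-moment bound for $|t-s|\gg(\log q)^4/q$, and check the regimes overlap for large $q$. Your proposal identifies the right difficulty (sharp divisor counts at scale $|t-s|\sim1/q$, and the discrepancy between congruence mod $q$ and equality) but does not supply the mechanism that actually resolves it; as written, the off-diagonal congruence solutions and the $q^\epsilon$ losses are not ``absorbed by an error of $\mainerror$ type'' and your tightness argument would not close.
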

   
\subsection{Proof Outline}
    Theorem \ref{thm: main theorem} shows that Steinhaus random multiplicative functions can be used as Fourier coefficients of a random process $F$ to find the limiting distribution of character paths. In Section \ref{s: Properties of F} we properly define the random process $F$ and prove some of its properties. Theorem \ref{thm: main theorem} only makes sense if $F(t)$ is a function almost surely in $C([0,1])$, which we prove in Theorem \ref{thm: F almost surely continuous}. 
    
    The proof of Theorem \ref{thm: main theorem} can be split into two parts: proving that the sequence $(\mathcal{F}_{q,\pm}(t))_{q}$ converges in finite distributions to the random process $F_\pm(t)$ and that the sequence of distributions is relatively compact. Convergence in finite distributions is proved in Section \ref{s: proof of main thm}, using the method of moments. To prove relative compactness, it is sufficient to use Prohorov's Theorem \cite[Theorem 5.1]{billingsley2013}, which states that if a family of probability measures is tight, then it must be relatively compact. Section \ref{s: tightness} proves the sequence of distributions $(\mathcal{F}_{q,\pm}(t))_{q}$ satisfies the tightness property, therefore proving Theorem \ref{thm: main theorem}.

   \begin{remark}
        As referred to earlier, Bober, Goldmakher, Granville and Koukoulopoulos \cite{bggk2014} investigated the distribution function $\Phi_q(\tau)$, defined in Equation \eqref{eq: phi probability from 4 author paper}. They proved $(\Phi_{q,\pm}(\tau))$ converges weakly to 
        \begin{align*}
            \textbf{Prob}\left( \max_{t}|F_\pm(t)| > 2e^\gamma \tau \right).
        \end{align*}
        as $q \rightarrow \infty$ through the primes. Theorem \ref{thm: main theorem} can be used to obtain the same result.
    \end{remark}
     \begin{remark}
    The definition of character paths is motivated by similar research by Kowalski and Sawin \cite{kowalskisawin2016, kowalskisawin2017}. In their papers they define \textit{Kloosterman paths}, $K_p(t)$, view the paths as random variables, and find their limiting distribution as $p \rightarrow \infty$. My work continues in this vein to investigate the analogous limiting distribution of character paths. Due to the multiplicativity of Dirichlet characters, our random multiplicative coefficients $X_n$ aren't independent. This leads to interesting properties shown in Section \ref{s: Properties of F}.
    \end{remark}
    
    \begin{remark}
        Theorem \ref{thm: main theorem} is restricted to $q$ being prime, so a natural question is to consider composite $q$ as well. Steinhaus random multiplicative functions are non zero so we need a high percentage of primitive characters modulo $q$. If we take $q$ not being too smooth we might be able to relax this condition, as the contribution from imprimitive characters could potentially be included in the error terms already produced from the method. Future work could explore the generalised case when the modulus of the characters is not prime.
    \end{remark}
   
\subsection*{Notation}
    We follow the usual convention of $e(x) = e^{2\pi i x}$ for all $x \in \mathbb{R}$ and $p^k \| n$ to be when $p^j|n$ for all $1 \leq j \leq k$ and $p^{k+1} \ndiv n$. We take $d_N(x)$ as the $N$th divisor function for $x \in \mathbb{N}$,
    \begin{align*}
        d_N(x) = \sum_{x_1 \cdots x_N = x} 1.
    \end{align*}
    Given a positive integer $n$, we define $P^+(n)$ and $P^-(n)$ as the largest and smallest prime divisors of $n$. We take $P^+(1) = 1$ and $P^-(1) = \infty$ as in conventional notation.
    
\subsection*{Acknowledgements}
    I would like to thank my Ph.D. supervisor Jonathan Bober for his guidance, as well as Andrew Granville and the anonymous referee for their comments.

\section{Properties of $F(t)$}\label{s: Properties of F}
    Recall the random process $F$, defined by the infinite sum
    \begin{align*}
        F(t) = \frac{\eta}{\pi} \sum_{n \neq 0} \frac{1 - e(nt)}{n}X_n,
    \end{align*}
    where $X_n$ are Steinhaus random multiplicative functions, defined in Definition \ref{def: Steinhaus rmf}, and $\eta$ is a random variable uniformly distributed on the unit circle. We define the infinite sum as a limit of the smooth sum
    \begin{align*}
        \frac{\eta}{\pi} \sum_{\substack{n \neq 0\\ P^+(|n|) \leq y}} \frac{1 - e(nt)}{n}X_n,
    \end{align*}
    as $y \rightarrow \infty$. In this section we deal with samples of the random process and prove some of their properties. We will be using sample functions of the random process, which we define here for ease of notation.
    
    \begin{definition}
    We say $G$ is a sample function of the random process $F$, where
    \begin{align*}
        G(t) \coloneqq \frac{c}{\pi} \sum_{n \neq 0} \frac{1 - e(nt)}{n}\alpha_n,
    \end{align*}
    for $c$ a sample of a Steinhaus random variable and $\alpha_n$ is a sample of a Steinhaus random multiplicative function.
    \end{definition}
    
    The Fourier coefficients are bounded by $O(1/n)$. This will be useful later in the section, where we show the infinite series $F$ can also be defined as the limit of partial symmetric sums.

    Our main result of Section \ref{s: Properties of F} is proving any sample function of $F$ is almost surely continuous. This is non trivial and involves considering the $y$-smooth and ``$y$-rough'' parts of the infinite sum $G(t)$.
    
    Let
    \begin{align*}
        &S_y \coloneqq \sum_{\substack{n \neq 0 \\ P^+(|n|) \leq y}} \frac{1 - e(nt)}{n} \alpha_n
        &\text{ and } &
        &R_y \coloneqq \sum_{\substack{n \neq 0 \\ P^+(|n|) > y}} \frac{1 - e(nt)}{n} \alpha_n,
    \end{align*}
    where $P^+(n)$ is the largest prime factor of $n$. Note that $S_y + R_y = \frac{\pi}{c}G(t)$ and the two functions are not independent. %These sums can also be seen as smooth and rough samples of the random process $F$.
    
    \begin{lemma}\label{thm: rough part is small}
        For all $\epsilon > 0$ and sufficiently large $y > 1$,
        \begin{align*}
            \prob \left( \|R_y\|_\infty > \epsilon \right) \ll \exp\left\{ \frac{-\epsilon^2 y^{1/3}}{\log y ( \log y + O(1))}\left( \log \left(\frac{(\log y)^{20}}{\log y + O(1)}\right) +  O\left(\log \epsilon\right)\right)\right\}
        \end{align*}
        independently of $S_y$, where $\| \cdot \|_\infty \coloneqq \max_{t \in [0,1]} |\cdot |$.  
        Notably for all $\epsilon > 0$,  we have $ \prob \left( \|R_y\|_\infty > \epsilon \right) \rightarrow 0$ as $y \rightarrow \infty$.
    \end{lemma}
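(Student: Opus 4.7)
The plan is a high-moment estimate at a single point combined with a discretisation of $[0,1]$ to promote the bound to the supremum. As a first step, I would condition on the Steinhaus variables $\alpha_p$ for $p \leq y$; this fixes $S_y$ and leaves $R_y$ as a function of the independent unit-modulus variables $(\alpha_p)_{p > y}$, automatically giving an estimate that is uniform in $S_y$ as the lemma advertises. Writing $R_y$ as the random Fourier series $-\sum_{P^+(|n|) > y} (\alpha_n/n)\, e(nt)$ with coefficients of size $O(1/|n|)$ then makes the analysis purely Fourier-analytic.

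For fixed $t$ I would compute the conditional $2k$-th moment $\mathbb{E}\bigl[|R_y(t)|^{2k} \mid (\alpha_p)_{p\leq y}\bigr]$ by expanding and applying the multiplicative orthogonality
\begin{align*}
    \mathbb{E}\bigl[\alpha_{n_1}\cdots\alpha_{n_k}\overline{\alpha_{m_1}\cdots\alpha_{m_k}}\bigr] = \mathbf{1}[n_1\cdots n_k = m_1 \cdots m_k],
\end{align*}
with minor signed adjustments to absorb $\alpha_{-1}$. Factoring each $n$ into its $y$-smooth and $y$-rough parts and taking absolute values (the smooth $\alpha_m$'s have modulus one and disappear) reduces the estimate to a restricted $k$-fold divisor sum over $y$-rough integers, which I would handle by Rankin's trick applied to an Euler product supported on primes $p > y$. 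This should yield an estimate of the form $(C_0 k/\log y)^k$ up to lower-order logarithmic factors.

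To pass from a point to the supremum I would split $R_y = R_y^{\leq M} + R_y^{>M}$ with $M = y^A$ for a large constant $A$. Since $R_y^{\leq M}$ is a trigonometric polynomial of degree $M$, Bernstein's inequality shows that $\|R_y^{\leq M}\|_\infty$ is controlled by its maximum on a grid of $N_0 \asymp M$ equally spaced points; applying Markov to the pointwise moment bound and union-bounding over the grid handles this piece. The tail $R_y^{>M}$ is controlled by a direct Parseval estimate using the $O(1/|n|)$ coefficient decay and is negligible for $M$ polynomial in $y$. Finally, optimising $k$ should produce the lemma's bound: the $y^{1/3}$ in the exponent reflects the balance between the $\log N_0$ union-bound cost and the power saving in $\log y$ from the moment estimate, while the $(\log y)^{20}$ factor inside the logarithm enters through $N_0 = y^A$ after taking a $k$-th root. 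The main obstacle I expect is the moment estimate itself: a naive Rankin bound on the restricted $k$-fold divisor sum loses several logarithmic factors, and extracting the correct exponent requires a careful choice of the Euler product's abscissa, exploiting the constraint that every prime factor of each $n_i$ exceeds $y$.
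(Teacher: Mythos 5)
Your proposal takes a genuinely different route from the paper, and it contains a real gap in the tail control.

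The paper does not condition or discretise. It uses the complete multiplicativity of $\alpha$ to factor every $y$-rough $n$ as $n = n'm$ with $n'$ $y$-smooth and $P^-(m) > y$, which gives the deterministic pointwise identity
\begin{align*}
    R_y(t) \;=\; \sum_{\substack{n' \geq 1 \\ P^+(n') \leq y}} \frac{\alpha_{n'}}{n'} \sum_{\substack{m > y \\ P^-(m) > y}} \frac{1 - e(mn't)}{m}\,\alpha_m \quad (\text{plus the negative-}n\text{ half}),
\end{align*}
and then bounds $\|R_y\|_\infty \leq 2\,T(\alpha)\sum_{P^+(n')\leq y}1/n' = 2\,T(\alpha)(e^\gamma\log y + O(1))$, where $T(\alpha)$ is the supremum of the inner sum over purely $y$-rough $m$ (independent of $n'$ because $t \mapsto n't$ preserves the period). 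The supremum is then handled in one stroke by quoting an adapted form of \cite[Proposition 5.2]{bggk2014}, which directly bounds $\mathbb{E}[T(\alpha)^{2k}] \ll (\log y)^{-40k}$, and Markov finishes. So all of the discretisation, chaining and Rankin work you are proposing to do is outsourced to BGGK's proposition. Your conditioning on $(\alpha_p)_{p\leq y}$ plays the same structural role as the paper's smooth harmonic factor $e^\gamma\log y + O(1)$; both isolate the rough fluctuation. The difference is that you plan to re-derive the supremum moment estimate from scratch via Bernstein and a grid union bound.

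The concrete gap is your treatment of the high-frequency tail $R_y^{>M}$. You write that it is ``controlled by a direct Parseval estimate using the $O(1/|n|)$ coefficient decay and is negligible for $M$ polynomial in $y$.'' Parseval controls $\|R_y^{>M}\|_{L^2}$, which is indeed $O(M^{-1/2})$, but the lemma needs $\|\cdot\|_\infty$. The pointwise bound $\|R_y^{>M}\|_\infty \leq \sum_{|n|>M}|c_n|$ is unavailable because $\sum_{|n|>M}1/|n|$ diverges, and there is no deterministic route from $L^2$ to $L^\infty$ for a generic trigonometric series. You would need a \emph{probabilistic} bound on $\|R_y^{>M}\|_\infty$ — another moment estimate on dyadic blocks, a Salem--Zygmund type argument, or chaining — which is precisely the content that BGGK Prop.~5.2 already packages. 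Without such an argument the discretisation step does not close. A secondary, smaller issue: your attribution of the $(\log y)^{20}$ factor to the grid size $N_0 = y^A$ is not how it arises in the paper; there it comes directly from the $(\log y)^{-40k}$ moment bound, and after taking the $2k$-th root the union-bound cost $(y^A)^{1/2k}$ is actually negligible for the paper's choice $k \asymp y^{1/3}/\log y$. Your plan could still land on the stated exponent, but the bookkeeping you sketch would need to be redone once the tail is handled properly.
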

    
     \begin{proof}
        For all $y \geq 1$,
        \begin{align*}
            \sum_{\substack{n \geq 1 \\ P^+(n) > y} } \frac{1 - e(nt)}{n}\alpha_n 
            = \sum_{\substack{n \geq 1 \\ P^+(n) \leq y} } \frac{\alpha_n}{n}\sum_{\substack{m > y \\ P^-(m) > y}} \frac{1 - e(mnt)}{m} \alpha_m, 
        \end{align*}
        where $P^-(m)$ is the smallest prime factor of $m$. By setting
        \begin{align*}
            T(\alpha) \coloneqq \max_{t \in [0,1]} \left| \sum_{\substack{m > y \\ P^-(m) > y}} \frac{1 - e(mt)}{m} \alpha_m \right|,
        \end{align*}
        we have
        \begin{align*}
            \| R_y\|_\infty \coloneqq \max_{t \in [0,1]} \left| \sum_{\substack{n \neq 0 \\ P^+(n) > y}} \frac{1 - e(nt)}{n} \alpha_n \right| \leq 2 \sum_{\substack{n \geq 1 \\ P^+(n) \leq y}} \frac{T(\alpha)}{n}.
        \end{align*}
        We then use the following result \cite[Theorem 2.7]{montgomeryvaughan2006},
        \begin{align*}
            \sum_{P+(n) \leq y} \frac{1}{n} = e^\gamma \log y + O(1),
        \end{align*}
        where we took the limit of the Lemma.
        Consequently, $\|R_y\|_\infty$ is bounded above by
        \begin{align*}
            \|R_y\|_\infty \leq 2 T(\alpha) \left(e^\gamma \log y + O(1)\right).
        \end{align*}
        
        Adapting \cite[Proposition 5.2]{bggk2014} for Steinhaus random multiplicative functions\footnote{We take $q\rightarrow \infty$ and apply a sample of a Steinhaus random multiplicative function $\alpha_n$ instead of $\chi(n)$. }, for $k \geq 3$ and $y \geq k^3$,
        \begin{align*}
            \mathbb{E} \left[ \left( \sum_{\substack{n > y \\ P^-(n) > y}} \frac{1 - e(mt)}{m} \alpha_m \right)^{2k} \right] \ll \frac{1}{(\log y)^{40k}}.
        \end{align*}
        We set $\rho_y$ as the probability $T(\alpha) > \epsilon(y)$ and 
        \begin{align*}
            k = \left\lfloor \frac{\epsilon(y)^2 y^{1/3}}{\log y} \right\rfloor.
        \end{align*}
        Therefore, 
        \begin{align*}
            \rho_y \leq  \frac{\mathbb{E}(T(\alpha)^{2k})}{\epsilon(y)^{2k}}  
            \ll \frac{\epsilon(y)^{-2k}}{(\log y)^{40k}} 
            \leq \left( \frac{\epsilon(y)^{-1}}{(\log y)^{20}} \right)^{\frac{2\epsilon(y)^2 y^{1/3}}{\log y}}.
        \end{align*}
        Taking $\epsilon(y) = \frac{\epsilon}{2e^\gamma \log y + O(1)}$ for $\epsilon > 0$,
        \begin{align*}
            \mathbb{P} \left( \|R_y\|_\infty > \epsilon \right) 
            &\leq \mathbb{P} \left( T(\alpha) > \frac{\epsilon}{2e^\gamma \log y + O(1)}\right) 
            \ll \left( \frac{2e^\gamma \log y + O(1)}{\epsilon(\log y)^{20}} \right)^{\frac{2\epsilon^2 y^{1/3}}{\log y(2e^\gamma \log y + O(1))}} \\
            &\ll \exp\left( \frac{-\epsilon^2 y^{1/3}}{\log y ( \log y + O(1))}\left( \log \left(\frac{(\log y)^{20}}{\log y + O(1)}\right) +  \log\left( O(\epsilon)\right)\right)\right).
        \end{align*}
        
        To prove the final part of the lemma, we take $y\rightarrow \infty$ to show the probability tends to $0$,
        \begin{align*}
           0 \leq \mathbb{P}\left( \| R_y \|_\infty > \epsilon \right) \ll \lim_{y \rightarrow \infty} \exp\left( \frac{-\epsilon^2 y^{1/3}}{\log y ( \log y + O(1))}\left( \log \left(\frac{(\log y)^{20}}{\log y + O(1)}\right) +  O\left( \log\epsilon\right)\right)\right) = 0.
        \end{align*}
    \end{proof}
    
    Lemma \ref{thm: rough part is small} can be appreciated more by taking $\epsilon = 1/\log y$, leading to the following example.
    \begin{example}
        For sufficiently large $y > 1$,
        \begin{align*}
            \prob \left( \|R_y\|_\infty > \frac{1}{\log y} \right) \ll \exp \left\{\frac{- y^{1/3}}{(\log y)^3 ( \log y + O(1))}\left( \log \left(\frac{(\log y)^{20}}{\log y + O(1)}\right) + O  \left(\log \log y\right)\right)\right\}
        \end{align*}
        independently of $S_y$.
    \end{example}
    
    Subsequently, defining $F$ as the limit of its smooth parts, we get the following theorem.
    \begin{theorem}\label{thm: F almost surely continuous}
        Let $F$ be the random Fourier series
        \begin{align*}
            F(t) \coloneqq \lim_{y \rightarrow \infty} \frac{\eta}{\pi} \sum_{\substack{n \neq 0 \\ P^+(|n|) \leq y}} \frac{1 - e(nt)}{n}X_n,
        \end{align*} 
        where $X_n$ are Steinhaus random multiplicative functions and $\eta$ is a random variable uniformly distributed on $\{ |z| = 1\}$.
        With probability 1 this is the Fourier series of a continuous function.
    \end{theorem}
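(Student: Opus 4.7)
The plan is to prove Theorem \ref{thm: F almost surely continuous} by showing that the smooth partial sums
\begin{align*}
F_y(t) \coloneqq \frac{\eta}{\pi}\sum_{\substack{n \neq 0 \\ P^+(|n|) \leq y}}\frac{1-e(nt)}{n}X_n
\end{align*}
form an almost surely Cauchy sequence in $C([0,1])$ along a suitable subsequence $y_k \to \infty$. The uniform limit is then automatically continuous, and by Lemma \ref{thm: rough part is small} it agrees almost surely with $F$.

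First I would verify that each individual $F_y$ is almost surely continuous. Since $|X_n| = 1$ and, by Mertens' theorem,
\begin{align*}
\sum_{P^+(n) \leq y} \frac{1}{n} = e^{\gamma}\log y + O(1) < \infty,
\end{align*}
the series defining $F_y$ has almost surely absolutely summable coefficients, so it converges uniformly in $t$ and defines a continuous function. This is the place where the multiplicative structure plus Mertens' estimate does real work: an arbitrary trigonometric series with $|c_n| = 1/|n|$ need not converge uniformly, but the restriction to $y$-smooth indices forces absolute summability.

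Next I would use Lemma \ref{thm: rough part is small} together with the Borel-Cantelli lemma to select a sparse sequence $y_k \to \infty$ (for example, large enough that $\log\log y_k \geq k$) for which $\sum_k \prob(\|R_{y_k}\|_\infty > 2^{-k}) < \infty$; this forces $\|R_{y_k}\|_\infty \leq 2^{-k}$ eventually, almost surely. Since every nonzero $n$ contributes to precisely one of the smooth or rough part, the telescoping identity
\begin{align*}
F_{y_{k'}} - F_{y_k} = \frac{\eta}{\pi}\bigl(R_{y_k} - R_{y_{k'}}\bigr), \qquad k < k',
\end{align*}
gives $\|F_{y_{k'}} - F_{y_k}\|_\infty \leq \pi^{-1}(2^{-k} + 2^{-k'})$ almost surely for large $k$, so $(F_{y_k})$ is almost surely Cauchy in $C([0,1])$ and converges uniformly to a continuous $F$. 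A final application of Lemma \ref{thm: rough part is small} upgrades this to convergence of the full family $F_y$ in probability in sup-norm, so the $y \to \infty$ limit in the theorem statement coincides with the almost-surely continuous function just constructed.

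The main obstacle will be making the identity between $F_{y_{k'}} - F_{y_k}$ and $\frac{\eta}{\pi}(R_{y_k} - R_{y_{k'}})$ rigorous: the rough sums $R_y$ are not absolutely convergent, so the equality cannot be obtained by naive rearrangement. I would interpret $R_y$ in the same way as in the proof of Lemma \ref{thm: rough part is small}, namely via the multiplicative factorization into a $y$-smooth divisor sum and a $y$-rough inner sum controlled in moments, and check that the two sides of the identity arise as the almost-sure limits of the same dyadic partial sums. Once this bookkeeping is settled, the Cauchy argument above closes the proof cleanly.
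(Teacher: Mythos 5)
The proposal follows the same strategy as the paper: use Lemma~\ref{thm: rough part is small} together with Borel--Cantelli to show the rough parts $R_y$ vanish almost surely in sup-norm, then deduce uniform convergence of the smooth parts and conclude via the uniform limit theorem. Three remarks on the differences. First, the sparse-subsequence and shrinking-$\epsilon_k$ bookkeeping are unnecessary: the bound in Lemma~\ref{thm: rough part is small} already decays roughly like $\exp(-c\,\epsilon^2 y^{1/3}/(\log y)^2)$ for each \emph{fixed} $\epsilon>0$, so $\sum_{y\geq 1}\prob(\|R_y\|_\infty>\epsilon)<\infty$ over all integer $y$, and intersecting the resulting almost-sure events over $\epsilon=1/m$, $m\in\mathbb{N}$, gives $\|R_y\|_\infty\to 0$ a.s.\ directly, which is precisely what the paper does. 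Second, your closing ``upgrade'' only asserts convergence of the full family $F_y$ \emph{in probability} in sup-norm; this is weaker than what the theorem asserts (an a.s.\ $y\to\infty$ limit), so as written the final step leaves a small gap. It is closed immediately by the full Borel--Cantelli argument just described, which gives a.s.\ uniform Cauchyness of $(F_y)$ along all integer $y$, not only along a sparse subsequence. Third, your caution about rigorously justifying $F_{y'}-F_y=\tfrac{\eta}{\pi}(R_y-R_{y'})$ is well placed and is glossed over in the paper; the clean way out is to note that $F_{y'}-F_y$ is itself an absolutely convergent sum over those $n$ with $y<P^+(|n|)\leq y'$, and that the same multiplicative factorisation and moment bound that prove Lemma~\ref{thm: rough part is small} apply verbatim to bound its sup-norm, so one never needs to manipulate the conditionally convergent tails $R_y$ as bona fide series.
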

    \begin{proof}
        We will prove this theorem by showing any sample of the random process $F$ is almost surely continuous.
        
        Consider the sequence of functions $(S_y)_y$ and $(R_y)_y$ defined by
        \begin{align*}
            S_y(t) \coloneqq \frac{c}{\pi} \sum_{\substack{n \neq 0 \\ P^+(|n|) \leq y}} \frac{1 - e(nt)}{n} \alpha_n, \\
            R_y(t) \coloneqq \frac{c}{\pi} \sum_{\substack{n \neq 0 \\ P^+(|n|) > y}} \frac{1 - e(nt)}{n} \alpha_n,
        \end{align*}
        where $c,\alpha_n$ are on the unit circle and $\{\alpha_n\}$ are completely multiplicative. Note that samples of the random process $F$ can be written as $S_y(t) + R_y(t)$ for appropriate choices of $\alpha_n$ and $c$.
        
        The function $S_y$ is the $y-$smooth part of a sample of the random process $F$, which we call $G(t)$. The sum $S_y$ converges absolutely, so $S_y(t)$ is a continuous function.
        
        To prove continuity with probability 1, we use the first Borel-Cantelli Lemma \cite[Chapter 2, Theorem 18.1]{gut2013} to show the $y-$rough part of $G$ vanishes as $y \rightarrow \infty$. Consider the sequence $\{ R_y : \|R_y\|_\infty > \epsilon\}_y$, where $\|R_y\|_\infty$ is the maximum of the $y-$rough part of $G$. Using Example \ref{thm: rough part is small},
        \begin{align*}
            \sum_{y = 1}^\infty \prob (\| R_y\|_\infty > \epsilon) \ll \sum_{y = 1}^\infty \exp \left\{\frac{- y^{1/3}}{(\log y)^3 ( \log y + O(1))}\left( \log \left(\frac{(\log y)^{20}}{\log y + O(1)}\right) + O  \left(\log \log y\right)\right)\right\} < \infty.
        \end{align*}
        As a result, the probability of $\|R_y\|_\infty > \epsilon$ occuring infinitely often is zero. Therefore, almost surely the rough part of the sample of $F$ is less than $\epsilon$. Consequently, $R_y$ vanishes almost surely as $y \rightarrow \infty$.
        
        As a result, the sequence of continuous functions $(S_y)$ uniformly converges to its limit, which by the Uniform Limit Theorem must be continuous. By uniform convergence we can compute the Fourier expansion, which recovers exactly what we expect.
        We defined $F$ as the limit of its smooth parts, so therefore any samples of $F$ are almost surely continuous.
    \end{proof}
    
    At the start of this section, we defined $F(t)$ as the limit as $y \rightarrow \infty$ of the smooth sum
    \begin{align*}
        \frac{\eta}{\pi} \sum_{\substack{n \neq 0 \\ P^+(|n|) \leq y}} \frac{1 - e(nt)}{n}X_n.
    \end{align*}
    Since the Fourier coefficients are bounded by $O(1/n)$ and $F$ is almost surely a Fourier series of a continuous function, all finite Fourier sums converge to $F$ uniformly \cite{stackexchange}. Consequently, we can also define the process $F$ as the limit as $N \rightarrow \infty$ of the partial symmetrical sums
    \begin{align*}
        \frac{\eta}{\pi}\sum_{\substack{|n| \leq N}} \frac{1 - e(nt)}{n}X_n.
    \end{align*}
    Therefore for the rest of the paper we can interchangeably use either definition for the infinite series $F(t)$.

\section{Convergence of Finite-Dimensional Distributions of $\mathcal{F}_q$}\label{s: proof of main thm}

    In order to prove Theorem \ref{thm: main theorem}, we will first show convergence of finite-dimensional distributions. We take $(\mathcal{F}_{q,\pm}(t))_{q \text{ prime}}$ as the sequence of distributions of character paths modulo $q$ dependent on the parity of the characters. Let $F_{\pm}(t)$ be random processes defined by
    \begin{align*}
            &F_+(t) \coloneqq \frac{\eta}{\pi} \sum_{k \geq 1} \frac{X_k}{k}\sin(2\pi kt) &\text{ and }&
            &F_-(t)\coloneqq \frac{\eta}{\pi} \sum_{k \geq 1} \frac{X_k}{k}(1 - \cos(2\pi kt)),
    \end{align*}
    where $X_n$ are Steinhaus random multiplicative functions, defined in Definition \ref{def: Steinhaus rmf}, and $\eta$ is uniformly distributed on the unit circle.
    
    \begin{theorem}\label{thm: finite distributions theorem}
        Let $q$ be an odd prime. The sequence of the distributions of character paths $(\mathcal{F}_{q,\pm}(t))_{q}$ converges to the process $F_{\pm}(t)$ in the sense of convergence of finite distributions.
        In other words, for every $n \geq 1$ and for every $n$-tuple $0 \leq t_1 < \cdots < t_n \leq 1$, the vectors 
        \begin{align*}
            \left( \mathcal{F}_{q,\pm}(t_1), \dots, \mathcal{F}_{q,\pm}(t_n) \right)
        \end{align*}
        converge in law as $q \rightarrow \infty$ through the primes to
        \begin{align*}
            \left( F_{\pm}(t_1), \dots, F_{\pm}(t_n) \right).
        \end{align*}
    \end{theorem}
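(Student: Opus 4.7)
The plan is to prove Theorem \ref{thm: finite distributions theorem} by the method of moments. Granting that the joint moment problem for $(F_\pm(t_1),\dots,F_\pm(t_n))$ is determinate (which follows from the polynomial growth of moments of Steinhaus random Fourier series via Carleman's criterion), it suffices to show that for every pair of multi-indices $\underline{a},\underline{b}\in\mathbb{Z}_{\geq 0}^n$,
\begin{align*}
    \mathbb{E}\Bigl[\prod_{j=1}^n \mathcal{F}_{q,\pm}(t_j)^{a_j}\,\overline{\mathcal{F}_{q,\pm}(t_j)}^{b_j}\Bigr]
    \;\longrightarrow\;
    \mathbb{E}\Bigl[\prod_{j=1}^n F_\pm(t_j)^{a_j}\,\overline{F_\pm(t_j)}^{b_j}\Bigr]
\end{align*}
as $q\to\infty$ through primes. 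First I would substitute the Fourier expansion \eqref{eq: fourier series of character sum} for each $f_\chi(t_j)$, absorbing the tail error of size $O(\log q/\sqrt q)$ globally. Setting $\varepsilon(\chi)\coloneqq \tau(\chi)/\sqrt{q}$ (unimodular for primitive $\chi$), and writing $r=\sum_j a_j$, $s=\sum_j b_j$, the left side becomes a multiple sum over frequencies $0<|k_\alpha|,|l_\beta|<q$ with smooth weights of the form $(1-e(-k_\alpha t_j))$ and $\overline{(1-e(-l_\beta t_j))}$, the normalising factor $1/(k_1\cdots k_r\,l_1\cdots l_s)$, and the character/Gauss-sum factor $\overline{\chi}(k_1\cdots k_r)\,\chi(l_1\cdots l_s)\,\varepsilon(\chi)^r\,\overline{\varepsilon(\chi)}^s$.

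Next, I would apply orthogonality of Dirichlet characters restricted to the appropriate parity class: for prime $q$,
\begin{align*}
    \frac{2}{\phi(q)}\sum_{\substack{\chi\bmod q\\ \chi(-1)=\pm 1}}\chi(m)\overline{\chi}(n)
    \;=\;\mathbbm{1}[m\equiv n\bmod q]\;\pm\;\mathbbm{1}[m\equiv -n\bmod q]\;+\;O(1/q),
\end{align*}
which collapses the $\chi$-average onto the support $k_1\cdots k_r\equiv\pm\,l_1\cdots l_s\pmod q$. Simultaneously, averaging the Gauss-sum phase $\varepsilon(\chi)^r\overline{\varepsilon(\chi)}^s$ over the family reduces to $\mathbb{E}_\chi[\tau(\chi)^{r-s}]/q^{(r-s)/2}$ which, by hyper-Kloosterman bounds, is $\mathbbm{1}[r=s]+O(q^{-1/2})$; this exactly mirrors $\mathbb{E}[\eta^r\overline{\eta}^s]=\mathbbm{1}[r=s]$ on the random Fourier-series side. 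Splitting the remaining sum into the diagonal, where $k_1\cdots k_r=l_1\cdots l_s$ as honest integers (signs tracked via the $X_{-1}$ factor), and the off-diagonal congruence-but-not-equality case, the diagonal contribution is precisely the mixed moment of the truncated partial series $F_\pm^{[q]}(t_j)\coloneqq\frac{\eta}{\pi}\sum_{0<|k|<q}\frac{X_k}{k}(1-e(kt_j))$, which converges to the desired moment of $F_\pm$ by the uniform convergence of partial sums proved at the end of Section \ref{s: Properties of F}.

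The main obstacle is the off-diagonal bound, where $N\coloneqq k_1\cdots k_r$ and $M\coloneqq l_1\cdots l_s$ are distinct positive integers with $N\leq q^r$, $M\leq q^s$, and $N\equiv\pm M\pmod q$. For fixed $N,M$ the number of ordered factorisations is bounded by $d_r(N)d_s(M)\ll_\delta (NM)^\delta$; pairing this with the smooth weights and the normalising factor $1/(NM)$, and using that for a fixed residue class $\sum_{N\leq q^R,\,N\equiv a\bmod q}1/N\ll R(\log q)/q$, the off-diagonal mass is of the form $O_{r,s,n}((\log q)^{C(r+s)}\,q^{-1+\delta})$, vanishing in the limit. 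The delicate bookkeeping here — managing the interaction between divisor-function losses, the cut-offs $|k_\alpha|,|l_\beta|<q$, the parity-induced $\pm$ ambiguity, and the $O(1/q)$ error in the orthogonality identity — is where the bulk of the technical work lies, but once completed it yields the asserted moment convergence and hence the theorem.
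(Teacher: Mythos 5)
Your proposal follows essentially the same route as the paper: method of moments with Carleman determinacy, Fourier expansion plus parity-restricted character orthogonality to isolate the congruence condition, Deligne/hyper-Kloosterman bounds to kill the unbalanced moments ($r\neq s$), and divisor-function estimates to control the off-diagonal congruence classes and extend the diagonal to the infinite sum. One imprecision worth correcting before filling in details: the Gauss-sum phase does not average \emph{independently} of the $\chi(\overline{a}b)$ factor, so the step ``reduces to $\mathbb{E}_\chi[\tau(\chi)^{r-s}]/q^{(r-s)/2}$'' is not literally available; the right statement (and what the paper does via Lemma~\ref{thm: granvillesoundlemma}) is that when $r=s$ the factor $\tau(\chi)^r\overline{\tau(\chi)}^s=q^r$ cancels deterministically, and when $r\neq s$ one pulls out $q^{\min(r,s)}$ and bounds the \emph{combined} sum $\sum_{\chi}\chi(c)\tau(\chi)^{|r-s|}$ by $\ll |r-s|\,q^{(|r-s|+1)/2}$ \emph{uniformly in $c$}, which is what makes the unbalanced contribution $O(q^{-1/2+\varepsilon})$ after normalisation.
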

    
    We prove this by the method of moments. We will define a moment $M_{q}$, of our distribution $\mathcal{F}_{q}$ and a moment $M$ for the random process $F$. 
    In Section \ref{s: method of moments}, we prove $M$ is determinate. Subsequently, in Section \ref{s: convergence of moments}, we prove this sequence of moments $M_q$ converges to $M$, the moment of $F$.
    This is sufficient to prove Theorem \ref{thm: finite distributions theorem}. 
    
    We are considering odd and even character paths separately.
    In this proof we will focus on results for odd character paths as the proof is analogous for the even character case. Where this is not the case, any changes will be addressed throughout the section.  
    
\subsection{Definitions of the Moments} \label{s: defn of moments}
    
    The Fourier series of the character path is 
    \begin{align*}
        f_\chi(t) = \frac{\tau(\chi)}{2 \pi i \sqrt{q}} \sum_{0 < |k| < q} \frac{\overline{\chi}(k)}{k} (1 - e(-kt)) + O\left( \frac{\log q}{\sqrt{q}} \right).
    \end{align*}
    This results from truncating the Fourier series of the character sum $S_\chi(t)$ and the trivial inequality $|f_\chi(t) - S_\chi(t)| \leq \frac{1}{\sqrt{q}}$. The paths of odd and even characters are shown to differ greatly, exemplified in Figure \ref{fig: mod10007}, due to the constant term vanishing when $\chi$ is even. As such, this paper will assess distributions of these character paths modulo odd prime $q$ separately, dependent on parity. As a Fourier series we split this into
    \begin{align*}
             f_\chi(t) = \bigg \{
             \begin{array}{cl} \frac{- \tau(\chi)}{\pi\sqrt{q}} \sum_{k = 1}^q \frac{\overline{\chi}(k)}{k}\sin(2\pi kt) + O\left( \frac{\log q}{\sqrt{q}}\right),  &  \chi \text{ even},\\ \frac{\tau(\chi)}{\pi i \sqrt{q}} \sum_{k =1}^q \frac{\overline{\chi}(k)}{k}(1 - \cos(2\pi kt)) + O\left( \frac{\log q}{\sqrt{q}}\right),  & \chi \text{ odd}. \end{array}
    \end{align*}
    
    We define our moments $M_q$ and $M$. In this section we will assume $\chi$ is odd as the proof is analogous to the even case. Therefore, taking a function from the odd distribution $\mathcal{F}_{q,-}$, we will take the character path modulo $q$ as
    \begin{align*}
        f_\chi(t) = \frac{\tau(\chi)}{\pi i\sqrt{q}} \sum_{1 \leq n \leq q} \frac{\overline{\chi}(n)}{n} (1 - \cos(2 \pi n t)) + O \left( \frac{\log q}{\sqrt{q}}\right).
    \end{align*}
    We will also be considering the odd random series
    \begin{align*}
        F_-(t) = \frac{\eta}{\pi} \sum_{n \geq 1} \frac{X_k}{k}(1 - \cos(2\pi nt)),
    \end{align*}
    which for ease of notation will be referred to as $F(t)$ for the rest of this section.

    \begin{definition}\label{def: Mq}
    Let $k \geq 1$ be given and $\underline{t} = (t_1, \dots, t_k)$, where $0 \leq t_1 < \dots < t_k \leq 1$, be fixed. Additionally fix $\underline{n} = (n_1, \dots, n_k)$ and $\underline{m} = (m_1, \dots, m_k)$, where $n_i,m_i \in \mathbb{Z}_{\geq 0}$. We define the moment sequence $M_q(\underline{n},\underline{m})$ as
        \begin{align*}
            M_q(\underline{n},\underline{m}) = \frac{2}{\phi(q)} \sum_{\chi \odd} \prod_{i=1}^k f_\chi(t_i)^{n_i} \overline{f_\chi(t_i)}^{m_i},
        \end{align*}
    and the moment $M(\underline{n},\underline{m})$ as 
    \begin{align*}
         M(\underline{n},\underline{m}) = \mathbb{E}\left(\prod_{i=1}^k F(t_i)^{n_i} \overline{F(t_i)}^{m_i}\right).
    \end{align*}
    \end{definition}
    
    The moment $M(\underline{n},\underline{m})$ is well defined. To show this we prove the equivalent result that $\mathbb{E}(|F(t)|^n)$ is bounded for all $n$. By Fubini's theorem,
    \begin{align*}
        \mathbb{E}\left(|F(t)|^n\right) = \int_0^\infty n x^{n-1} \mathbb{P}\left( |F(t)| > x\right) dx.
    \end{align*}
    We then use a result by Bober, Goldmakher, Granville and Koukoulopoulos \cite{bggk2014}: Let $c = e^{-\gamma}\log 2$. For any $\tau \geq 1$,
    \begin{align*}
        \mathbb{P}\left( \max_{0 \leq t \leq 1} \left| F(t)\right| > 2 e^\gamma \tau \right) \leq \exp \left\{ - \frac{e^{\tau - c - 2}}{\tau} \left( 1 + O \left(\frac{\log \tau}{\tau} \right) \right) \right\}.
    \end{align*}
    Therefore, combining both equations, the moment is finite and well defined. 
    
    $F(t)$ is a random process, defined by the almost surely converging sum
    \begin{align*}
        F(t) = \frac{\eta}{\pi}\sum_{a \geq 1} \frac{X_a}{a}(1 - \cos(2 \pi a t)).
    \end{align*}
    
    As shown in Section \ref{s: Properties of F} we can define $F$ as the limit of the symmetric partial sums. The infinite series $F$ is not absolutely convergent, so justification is needed to manipulate the product $\prod_{i=1}^k F(t_i)^{n_i} \overline{F(t_i)}^{m_i}$. 
    
    We write the expansion of $F(t_i)^{n_i}$ as
    \begin{align*}
       \frac{\eta^{n_i}}{\pi^{n_i}} \sum_{a_{i,1}, \dots, a_{i,n_i} \geq 1} \prod_{j = 1}^{n_i} \frac{X_{a_{i,j}}}{a_{i,j}} (1 - \cos(2 \pi a_{i,j}t_i)),
    \end{align*}
    and $\overline{F(t_i)}^{m_i}$ in a similar manner. Without changing the order of summation, the product $\prod_{i=1}^k F(t_i)^{n_i} \overline{F(t_i)}^{m_i}$ is therefore
    \begin{align*}
             \frac{\eta^{n}\overline{\eta}^m}{\pi^{n+m}}\sum \cdots
            \sum \prod_{i=1}^k \prod_{j=1}^{n_i} \prod_{j'=1}^{m_i} \frac{X_{a_{i,j}}\overline{X_{b_{i,j'}}}}{a_{i,j}b_{i,j'}}(1 - \cos(2 \pi a_{i,j} t_i))(1 - \cos(2 \pi b_{i,j'} t_i)),
    \end{align*}
    where $n = |\underline{n}|$ and $m = |\underline{m}|$ as above. The sums are over $a_{i,j} \geq 1$ and $b_{i,j'} \geq 1$, where $j \in [1,n_i]$ and $j' \in [1,m_i]$ for $i \in [1,k]$.
   
    The moment $M(\underline{n},\underline{m})$ is the expectation of this multivariate sum. To simplify the equation, we want to swap the order of expectation with the order of summation. Since the moment is finite, we use Lebesgue's dominated convergence theorem \cite[Chapter 2, Corollary 5.3]{gut2013} to bring the expectation inside the sum. Using the multiplicativity of Steinhaus random multiplicative functions, the moment $M$ therefore equals\footnote{Note the moment will be different for $F_+$, where we have $\sin(2 \pi a t)$ instead of $(1 - \cos(2 \pi a t))$.} 
    \begin{align*}
        \mathbb{E}\left( \frac{\eta^{n}\overline{\eta}^m}{\pi^{n+m}}\right) \sum \cdots \sum\mathbb{E}\left(X_{a}\overline{X_{b}}\right)  \prod_{i=1}^k \prod_{j=1}^{n_i} \prod_{j'=1}^{m_i} \frac{(1 - \cos(2 \pi a_{i,j} t_i))(1 - \cos(2 \pi b_{i,j'} t_i))}{a_{i,j}b_{i,j'}},
    \end{align*}
    where 
    \begin{align*}
        \begin{array}{cc}
             a \coloneqq \prod_{i=1}^k \prod_{j=1}^{n_i} a_{i,j}, & b \coloneqq \prod_{i=1}^k \prod_{j' = 1}^{m_i} b_{i,j'}. 
        \end{array}
    \end{align*}
    
    Steinhaus random multiplicative functions $X_n$ are orthogonal as $n$ can always be written as a unique prime factorisation and $\mathbb{E}(X_p) = 0$ for all primes $p$. In other words, 
        \begin{align*}
            \mathbb{E}\left( X_{a} \overline{X_{b}}\right) = \mathbbm{1}_{a = b} \coloneqq \bigg\{ \begin{array}{cl}
                1, & a = b \\
                0, & \text{otherwise}
            \end{array}.
        \end{align*}
        
    Therefore we can rewrite the moment as follows,
    \begin{align*}
         M = \mathbb{E}\left( \frac{\eta^{n}\overline{\eta}^m}{\pi^{n+m}}\right)
         \sum_{l = 1}^\infty \sum_{a = b = l} \frac{1}{ab} \prod_{i=1}^k \prod_{j=1}^{n_i} \prod_{j'=1}^{m_i} (1 - \cos(2 \pi a_{i,j} t_i))(1 - \cos(2 \pi b_{i,j'} t_i)),
    \end{align*}
    where $a$ and $b$ are the product of $a_{i,j}$ and $b_{i,j'}$ respectively. Taking $\frac{1}{ab} = \frac{1}{l^2}$ and bounding $(1 - \cos(x)) \leq 2$, $M$ is clearly bounded as a function of $n_i$ and $m_i$. These variables are fixed and finite, so the moment is absolutely convergent. Therefore, we can swap the order of summation. As a result,
    \begin{align}\label{eq: limiting moment}
        M(\underline{n},\underline{m}) = \mathbb{E}\left( \frac{\eta^{n}\overline{\eta}^m}{\pi^{n+m}}\right) \sum_{a \geq 1} \mathcal{B}_{\underline{n},\underline{t}}(a) \mathcal{B}_{\underline{m},\underline{t}}(a),
    \end{align}
    where 
    \begin{align}\label{eq: mathcal B for limiting moment}
        \mathcal{B}_{\underline{N},\underline{t}}(x) \coloneqq \sum_{x_1 \cdots x_k = x} \prod_{i=1}^k \beta_{N_i,t_i}(x_i)
    \end{align}
    and 
    \begin{align}\label{eq: beta for limiting moment}
        \beta_{N_i,t_i}(x_i)= \sum_{y_1 \cdots y_{N_i} = x_i} \frac{1}{x_i}\prod_{j=1}^{N_i} (1 - \cos(2 \pi y_j t_i)).
    \end{align}

    The moment $M_q(\underline{n},\underline{m})$ can be also be rewritten using methods from Bober and Goldmakher \cite{bobergoldmakher2013}. First, we use the Fourier expansion of $f_\chi(t)$, so 
    \begin{align*}
            f_\chi(t_i)^{n_i}\overline{f_\chi(t_i)}^{m_i}
            &= \frac{\tau(\chi)^{n_i}\overline{\tau(\chi)}^{m_i}}{(\pi \sqrt{q} )^{n_i + m_i}i^{n_i - m_i}} \sum_{\substack{1 \leq a \leq q^{n_i} \\ 1 \leq b \leq q^{m_i}}} \overline{\chi}(a)\chi(b)\beta_{n_i,q,t_i}(a) \beta_{m_i,q,t_i}(b)
            + O\left( \frac{(\log q + 1)^{n_i + m_i}}{\sqrt{q}}\right),
        \end{align*}
        where $\beta_{N,q,t}$ is defined as
    \begin{align}
        \beta_{N,q,t}(x) \coloneqq \sum_{\substack{ x_1 \cdots x_{N} = x \\ x_i \leq q}} \frac{1}{x}\prod_{k = 1}^{N} ( 1- \cos(2 \pi x_k t) ),
    \end{align}
    for $(x,q) = 1$ and 0 otherwise\footnote{For even characters, $\beta_{N,q,t}$ would instead sum over the product of $\sin(2\pi x_k t)$.}.

   Continuing to expand $M_q(\underline{n},\underline{m})$, we take a product of all $ f_\chi(t_i)^{n_i}\overline{f_\chi(t_i)}^{m_i}$ for $i \in [1,k]$, showing
    \begin{align*}
         \prod_{i = 1}^k f_\chi(t_i)^{n_i}\overline{f_\chi(t_i)}^{m_i}
        = \frac{\tau(\chi)^{n}\overline{\tau(\chi)}^{m}}{(\pi \sqrt{q} )^{n + m}i^{n-m}} \sum_{\substack {1 \leq a \leq q^{n} \\ 1 \leq b \leq q^{m} \\ \text{ for } i \in [1,k]}} \overline{\chi}(a)\chi(b) \mathcal{B}_{\underline{n},q,\underline{t}}(a) \mathcal{B}_{\underline{m},q,\underline{t}}(b) + O \left( \frac{(\log q)^{n+m}}{\sqrt{q}} \right),
    \end{align*}
    where 
    $$n \coloneqq n_1 + \cdots + n_k \text{ and }m\coloneqq m_1 + \cdots + m_k$$
    and
    \begin{align}\label{eq: definition of mathcal b}
        \mathcal{B}_{\underline{N},q,\underline{t}}(x) \coloneqq \sum_{\substack{x_1 \cdots x_k = x \\ x_i \leq q^{N_i}}} \prod_{i=1}^k \beta_{N_i,q,t_i}(x_i),
    \end{align}
    for $(x,q) = 1$ and 0 otherwise.
    Note that $\mathcal{B}_{\underline{N},\underline{t}}$ and $\beta_{N_i,t_i}$ from Equations \eqref{eq: mathcal B for limiting moment} and \eqref{eq: beta for limiting moment} are the limits as $q \rightarrow \infty$ of $\mathcal{B}_{\underline{N},q,\underline{t}}$ respectively.
    Furthermore, we take the average of this product over all odd Dirichlet characters $\chi$ to find 
    \begin{align}\label{eq: Mq}
            M_q(\underline{n},\underline{m}) =& \frac{1}{(\pi \sqrt{q})^{n + m}i^{n-m}} \sum_{\substack {1 \leq a \leq q^{n} \\ 1 \leq b \leq q^{m} }}\left( \mathcal{B}_{\underline{n},q,\underline{t}}(a)\mathcal{B}_{\underline{m},q,\underline{t}}(b) \right) \frac{2}{\phi(q)} \sum_{\substack{\chi \mod q \\ \chi \odd}} \overline{\chi}(a)\chi(b) \tau(\chi)^{n} \overline{\tau(\chi)}^m\\
             &+ O \left( \frac{(\log q)^{n+m}}{\sqrt{q}} \right). \nonumber
        \end{align}
     This form is more useful for future calculations and will used to prove $M_q$ tends to $M$ as $q \rightarrow \infty$ through the primes.

\subsection{Bounding the Moments}\label{s: bounding the moments}

    Later in the paper we will be interested in bounding $\mathcal{B}_{\underline{N},q,\underline{t}}$ and $\mathcal{B}_{\underline{N},\underline{t}}$. The inequality we find is independent of $q$, so we can consider both bounds at the same time. Therefore for this subsection we will work with $\mathcal{B}_{\underline{N},q,\underline{t}}$. 
    
    Recall,
    \begin{align*}
        \mathcal{B}_{\underline{N},q,\underline{t}}(x) &= \sum_{\substack{x_1 \cdots x_k = x \\ x_i \leq q^{N_i}}} \prod_{i=1}^k \beta_{N_i,q,t_i}(x_i),
    \end{align*}
    where
    \begin{align*}
        \beta_{N,q,t}(x_i) = \sum_{\substack{y_1 \cdots y_N = x_i \\ y_j \leq q}} \frac{1}{x_i} \prod_{j=1}^N (1-\cos(2 \pi y_j t)),
    \end{align*}
    for $(x_i,q) = 1$ and 0 otherwise.
    Since $|1 - \cos(2 \pi y_j t)| \leq 2$, we always have the bound
    \begin{align*}
         | \beta_{N,q,t}(x) | \leq \frac{2^N d_N(x)}{x},
    \end{align*}
    where $d_N(x)$ is the $N$th divisor function $\sum_{x_1 \cdots x_N = x} 1$\footnote{For $F_+$ and even character paths, the $2^N$ vanishes in the bound of $\beta$ as $|\sin(2 \pi y_j t)| \leq 1$. However, since this bound is only included in error terms, the difference of the constant is irrelevant.}. As a result,
    \begin{align*}
        \mathcal{B}_{\underline{N},q,\underline{t}}(x) \leq \frac{2^N}{x} \sum_{\substack{x_1 \cdots x_k = x \\ x_i \leq q^{N_i}}} \prod_{i=1}^k d_{N_i}(x_i),
    \end{align*}
    where $N = \sum N_i = |\underline{N}|$. To further bound $\mathcal{B}$ we next use the following lemma.
    \begin{lemma}\label{thm: divisor multiplying}
        Let $d_{N_1}(x_1),d_{N_2}(x_2)$ be the $N_1$th and $N_2$th divisor function of $x_1,x_2 \in \mathbb{N}$ respectively. We have the relation
        \begin{align*}
            d_{N_1}(x_1) d_{N_2}(x_2) \leq d_{N_1 + N_2}(x_1 \cdot x_2).
        \end{align*}
    \end{lemma}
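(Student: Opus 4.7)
The plan is to prove this combinatorially by exhibiting an injection from the objects counted on the left-hand side into the objects counted on the right-hand side. Recall that $d_N(x)$ counts ordered factorizations of $x$ into exactly $N$ positive integer factors, i.e., the number of tuples $(y_1,\dots,y_N) \in \mathbb{N}^N$ with $y_1 \cdots y_N = x$.

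The product $d_{N_1}(x_1)\, d_{N_2}(x_2)$ therefore counts pairs of tuples $\bigl((y_1,\dots,y_{N_1}),(z_1,\dots,z_{N_2})\bigr)$ satisfying $y_1\cdots y_{N_1} = x_1$ and $z_1\cdots z_{N_2} = x_2$. I would define the map
\begin{align*}
    \Phi\bigl((y_1,\dots,y_{N_1}),(z_1,\dots,z_{N_2})\bigr) \coloneqq (y_1,\dots,y_{N_1}, z_1,\dots,z_{N_2}),
\end{align*}
sending such a pair to the concatenated $(N_1+N_2)$-tuple. The product of the entries of $\Phi(\cdot)$ equals $x_1 x_2$, so $\Phi$ lands in the set counted by $d_{N_1+N_2}(x_1 x_2)$.

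It remains to verify that $\Phi$ is injective, which is immediate: given the concatenated tuple $(w_1,\dots,w_{N_1+N_2})$ in the image, the splitting point is fixed (after the $N_1$-th entry), so the preimage is uniquely determined by taking $(y_i) = (w_1,\dots,w_{N_1})$ and $(z_j) = (w_{N_1+1},\dots,w_{N_1+N_2})$. Consequently
\begin{align*}
    d_{N_1}(x_1)\, d_{N_2}(x_2) = \#\{\text{source}\} \leq \#\{\text{target}\} = d_{N_1+N_2}(x_1 x_2),
\end{align*}
which is the claim. There is no real obstacle here; the only subtlety to be mindful of is that $\Phi$ is generally not surjective, because an arbitrary factorization of $x_1 x_2$ of length $N_1+N_2$ need not have its first $N_1$ factors multiplying to $x_1$, but surjectivity is not needed for an inequality.
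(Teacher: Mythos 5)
Your proof is correct and takes essentially the same combinatorial approach as the paper, interpreting $d_N(x)$ as counting ordered factorizations of $x$ into $N$ positive integers; you simply spell out the concatenation injection that the paper leaves implicit.
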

    \begin{proof}
        We apply a combinatorial argument, where we view $d_{N}(x)$ as the number of ways of choosing $N$ positive integers that multiply to $x$. Therefore $d_{N_1+N_2}(x_1 \cdot x_2)$ is at least the number of ways of choosing $N_1$ integers multiplying to $x_1$ times the number of ways of choosing $N_2$ integers multiplying to $x_2$.
    \end{proof}

    Using Lemma \ref{thm: divisor multiplying}, we bound $\mathcal{B}_{\underline{N},q,\underline{t}}(x)$ by
    \begin{align}
    \mathcal{B}_{\underline{N},q,\underline{t}}(x) &\leq \frac{2^N d_N(x)}{x}\sum_{\substack{x_1\cdots x_k = x \\ x_i \leq q^{N_i}}} 1 
    \leq \frac{2^N d_N(x)d_k(x)}{x} \label{eq: bound of mathcal B} \\
    &\leq \frac{2^N d^{N+k}(x)}{x}. \nonumber
    \end{align}
    In parts of the proof, it is sufficient to use the looser bound, however we will mainly apply the bound from Equation \eqref{eq: bound of mathcal B}.
    This will be useful in future equations. Note that this is independent of $q$ and $\underline{t}$, so the bounds hold for $\mathcal{B}_{\underline{N},\underline{t}} = \lim_{q \rightarrow \infty} \mathcal{B}_{\underline{N},q,\underline{t}}$.
    
\subsection{Proving Determinacy}\label{s: method of moments}

     Our aim is to use the method of moments to prove the distribution of character paths modulo $q$ converges to $F(t)$ in the sense of finite distributions. For this we need to show the moment $M(\underline{n},\underline{m})$ is determinate, or in other words show the moment only has one representing measure. 
     To show that $M$ is a determinate complex moment sequence, it is sufficient to show that it satisfies
    \begin{align}\label{eq: carlemanconditioncomplex}
        \sum_{n=1}^\infty M(\underline{n},\underline{n})^{-1/2n} = \infty,
    \end{align}
    where $\underline{n} = (n_1, \dots, n_k)$ and $n = |\underline{n}| = \sum_i n_i$. This is also known as the Carleman condition \cite[Theorem 15.11]{schmudgen2017}.
    
    \begin{lemma}
        The moment $M(\underline{n},\underline{m})$ satisfies Equation \eqref{eq: carlemanconditioncomplex}.
    \end{lemma}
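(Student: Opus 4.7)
The plan is to bound $M(\underline{n},\underline{n})$ above by a single moment of $\|F\|_\infty = \max_{t\in[0,1]}|F(t)|$ and then feed this moment into the tail estimate of Bober, Goldmakher, Granville and Koukoulopoulos already cited in Section~\ref{s: defn of moments}. Since $|\eta|=1$ almost surely, Definition~\ref{def: Mq} simply reads
\begin{align*}
M(\underline{n},\underline{n}) = \mathbb{E}\!\left( \prod_{i=1}^k |F(t_i)|^{2n_i} \right) \leq \mathbb{E}\!\left( \|F\|_\infty^{2n} \right),
\end{align*}
with $n = |\underline{n}|$, reducing the entire question to controlling $\mathbb{E}(\|F\|_\infty^{2n})$.

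To estimate $\mathbb{E}(\|F\|_\infty^{2n})$, I would use the layer-cake formula, substitute $s = 2e^\gamma\tau$, and split the resulting integral at $\tau = A\log n$ for a sufficiently large absolute constant $A$. On the low range $\tau \leq A\log n$, the trivial bound $\mathbb{P}(\|F\|_\infty > s)\leq 1$ gives a contribution of order $(2e^\gamma A\log n)^{2n}$. On the high range $\tau > A\log n$, the cited tail bound yields $\mathbb{P}(\|F\|_\infty > 2e^\gamma\tau) \leq \exp(-c_1 e^\tau/\tau)$ for some $c_1 > 0$ (obtained by absorbing the $1 + O(\log\tau/\tau)$ factor once $\tau$ is large). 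Once $A$ is chosen so that $e^\tau/\tau$ comfortably dominates $(2n+1)\log \tau$ throughout $[A\log n, \infty)$, the doubly exponential decay annihilates the polynomial factor $\tau^{2n-1}$, so the tail contribution is negligible compared with the main term.

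Combining the two ranges yields $\mathbb{E}(\|F\|_\infty^{2n}) \leq (C\log n)^{2n}$ for some absolute constant $C > 0$ and all sufficiently large $n$. Consequently $M(\underline{n},\underline{n})^{1/(2n)} \leq C\log n$, and therefore
\begin{align*}
\sum_{n\geq 2} M(\underline{n},\underline{n})^{-1/2n} \;\geq\; \sum_{n\geq 2} \frac{1}{C\log n} \;=\; \infty,
\end{align*}
which is exactly the Carleman condition \eqref{eq: carlemanconditioncomplex}, uniformly in how $\underline{n}$ is distributed over its coordinates.

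The main obstacle is calibrating the tail-integral step: one must pin down the threshold $A\log n$ so that it lies past the transition region where $2n\log\tau$ balances $e^\tau/\tau$ (which sits at $\tau \asymp \log n$), and verify that the contribution beyond is truly negligible rather than of the same order as the main term. Once that calibration is in place, the remaining estimates are routine, and the resulting bound of size $(C\log n)^{2n}$ is sharp enough to comfortably produce a divergent series.
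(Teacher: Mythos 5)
Your argument is correct, and it takes a genuinely different route from the paper's. The paper bounds $M(\underline{n},\underline{n})$ arithmetically: starting from $M(\underline{n},\underline{n}) = \pi^{-2n}\sum_{a\geq 1}\mathcal{B}_{\underline{n},\underline{t}}(a)^2$, it applies the divisor-function bound $\mathcal{B}_{\underline{n},\underline{t}}(a)\leq 2^n d_n(a)d_k(a)/a$ from Section~\ref{s: bounding the moments}, absorbs $d_k(a)$ into an $a^{\epsilon_k}$ factor, and then invokes Bober--Goldmakher's Proposition~3.2 for $\sum_a d_n(a)^2 a^{-2\sigma}$; letting $\sigma = 1-\epsilon_k\to 1$ makes the dominant factor $\exp(2n\sigma\log\log(2n)^{1/\sigma})$ yield $M^{1/2n}\lesssim\log n$, and divergence follows. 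You instead dominate $M(\underline{n},\underline{n}) = \mathbb{E}\bigl(\prod_i|F(t_i)|^{2n_i}\bigr)\leq\mathbb{E}(\|F\|_\infty^{2n})$ and feed the BGGK tail bound $\mathbb{P}(\|F\|_\infty>2e^\gamma\tau)\leq\exp\{-e^{\tau-c-2}(1+O(\log\tau/\tau))/\tau\}$ --- which the paper already quotes in Section~\ref{s: defn of moments} to show $M$ is finite --- into the layer-cake formula with a cut at $\tau\asymp\log n$, arriving at $\mathbb{E}(\|F\|_\infty^{2n})\leq(C\log n)^{2n}$. The calibration worry you flag is not an obstacle: at $\tau=A\log n$ with $A\geq 2$ the exponent $e^\tau/\tau\asymp n^A/\log n$ already dwarfs the $2n\log\tau$ coming from the polynomial factor, and it only improves for larger $\tau$, so the tail integral is $O(1)$ and harmless. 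Both routes land at the same $M^{1/2n}\lesssim\log n$ asymptotic. Yours is shorter, leans on a single probabilistic input, and gives a bound depending only on $n=|\underline{n}|$ (so uniformity across the tuple is automatic); the paper's stays inside the combinatorial framework built for the moment computation, which makes the dependence on $d_n$ and $d_k$ explicit but requires an extra cited estimate on Dirichlet series of squared divisor functions. Either way the Carleman series diverges like $\sum 1/\log n$.
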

    
    \begin{proof}
        This is shown using Equation \eqref{eq: limiting moment} and taking $\underline{n} = \underline{m}$. Setting $n = |\underline{n}| = |\underline{m}|$, we have
        \begin{align*}
            M(\underline{n},\underline{m}) = \frac{1}{\pi^{2n}} \sum_{a \geq 1} \mathcal{B}_{\underline{n},\underline{t}}(a)^2.
        \end{align*}
    We use the bound of $\mathcal{B}$ from Equation \eqref{eq: bound of mathcal B}, taking $d_k(a) \leq a^{\epsilon_k}$ for small $\epsilon_k > 0$, so
    \begin{align*}
        M(\underline{n},\underline{n}) \leq \frac{2^{2n}}{\pi^{2n}}\sum_{a\geq 1} \frac{d_n(a)^2}{a^{2-2\epsilon_k}} =: \frac{2^{2n}}{\pi^{2n}}\sum_{a\geq 1} \frac{d_n(a)^2}{a^{2\sigma}},
    \end{align*}
    taking $\sigma \coloneqq 1 - \epsilon_k$.
    We can use Proposition 3.2 from Bober and Goldmakher \cite{bobergoldmakher2013}, which states for $1/2 < \sigma \leq 1$,
    \begin{align}\label{thm: proposition 3.2 from Bober and Goldmkaher}
        \sum_{a=1}^\infty \frac{d_n(a)^2}{a^{2\sigma}} \leq \exp\left( 2 n \sigma \log\log(2n)^{1/\sigma} + \frac{(2n)^{1/\sigma}}{2\sigma - 1} + O\left( \frac{n}{2\sigma - 1} + \frac{(2n)^{1/\sigma}}{\log(3(2n)^{1/\sigma - 1})}\right) \right).
    \end{align}
    
    Here we have shown the sum has the lower bound
    \begin{align*}
        \sum_{n=1}^\infty M(\underline{n},\underline{n})^{-1/2n} \geq \frac{\pi}{2}\sum_{n=1}^\infty \exp\left( - \sigma \log\log\left((2n)^{1/\sigma}\right) - \frac{(2n)^{1/\sigma-1}}{2\sigma - 1} + O\left( \frac{1}{2\sigma - 1} + \frac{(2n)^{1/\sigma-1}}{\log(3(2n)^{1/\sigma - 1})}\right) \right).
    \end{align*}
    The lower bound can be rewritten as
    \begin{align*}
        \frac{\pi}{2}\sum_{n=1}^\infty \frac{\sigma^\sigma}{(\log 2n)^\sigma} \exp\left( - \frac{(2n)^{\frac{1 - \sigma}{\sigma}}}{2\sigma - 1}\right) \exp \left(O\left( \frac{1}{2\sigma - 1} + \frac{(2n)^{1/\sigma-1}}{\log(3(2n)^{1/\sigma - 1})}\right) \right).
    \end{align*}
    Tending $\sigma = 1 - \epsilon_k$ to 1, this sum diverges. Therefore
    \begin{align*}
         \sum_{n=1}^\infty M(\underline{n},\underline{n})^{-1/2n} = \infty,
    \end{align*}
    and the Carleman condition holds. Therefore the claim is proved.
    \end{proof}

\subsection{Convergence of Moments}\label{s: convergence of moments}

        In this section we show the moment sequence $M_q$ converges to the multivariate moment of $F$, therefore proving Theorem \ref{thm: finite distributions theorem}. Separating the distribution by parity, we have two propositions.
        
    \begin{proposition}\label{thm: moment convergence}
         Let $k \geq 1$ be given and $0 \leq t_1 < \dots < t_k \leq 1$ be fixed. Fix $\underline{n} = (n_1, \dots, n_k)$ and $\underline{m} = (m_1, \dots, m_k)$, where $n_i,m_i \in \mathbb{Z}_{\geq 0}$. Let
        \begin{align*}
            M_{q,-}(\underline{n},\underline{m}) = \frac{2}{\phi(q)} \sum_{\chi \odd} \prod_{i=1}^k f_\chi(t_i)^{n_i} \overline{f_\chi(t_i)}^{m_i}.
        \end{align*}
        Then for all $\epsilon > 0$,
        \begin{align*}
            M_{q,-}(\underline{n},\underline{m}) = M_-(\underline{n},\underline{m}) + O_{\underline{n},\underline{m},k}\left( q^{-1/2 + \epsilon} \right),
        \end{align*}
        where
        \begin{align*}
            M_-(\underline{n},\underline{m}) = \mathbb{E}\left( \prod_{i=1}^k F_-(t_i)^{n_i} \overline{F_-(t_i)}^{m_i} \right).
        \end{align*}
        Importantly, $M_{q,-}(\underline{n},\underline{m}) \rightarrow M_-(\underline{n},\underline{m})$ as $q \rightarrow \infty$ through the primes.
    \end{proposition}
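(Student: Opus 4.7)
The plan is to start from the expression for $M_{q,-}(\underline{n},\underline{m})$ in Equation \eqref{eq: Mq} and analyse the character--Gauss average
\[
\mathcal{S}(a,b) \coloneqq \frac{2}{\phi(q)} \sum_{\chi \odd} \overline{\chi}(a)\chi(b)\tau(\chi)^n \overline{\tau(\chi)}^m.
\]
When $n = m$ one can exploit $|\tau(\chi)|^{2n} = q^n$ directly, so that $\mathcal{S}(a,b) = q^n\bigl[\mathbf{1}_{a \equiv b \bmod q} - \mathbf{1}_{a \equiv -b \bmod q}\bigr]$ by odd-character orthogonality. When $n \neq m$, I would expand the Gauss sums into an $(n+m)$-fold sum over residues $u_1,\dots,u_n,v_1,\dots,v_m \bmod q$, producing the character value $\chi(u_1\cdots u_n)\overline{\chi}(v_1\cdots v_m)$ and the phase $e\bigl((u_1+\cdots+u_n - v_1-\cdots-v_m)/q\bigr)$, then apply the orthogonality identity
\[
\frac{2}{\phi(q)} \sum_{\chi \odd} \chi(c) = \mathbf{1}_{c \equiv 1 \bmod q} - \mathbf{1}_{c \equiv -1 \bmod q}, \qquad (c,q)=1,
\]
reducing $\mathcal{S}(a,b)$ to a constrained exponential sum with congruence $bu_1\cdots u_n \equiv \pm a v_1\cdots v_m \pmod{q}$.

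The main term should come from the $n = m$ case restricted to the genuine equality $a = b$ (the $\kappa = 0$ case in the decomposition $a - b = \kappa q$). Substituting this into Equation \eqref{eq: Mq}, the factors $i^{n-m}=1$, $\pi^{2n}$ and $(\sqrt{q})^{2n}$ cancel against the $q^n$ coming from $|\tau(\chi)|^{2n}$, leaving the sum $\pi^{-2n}\sum_{a \le q^n}\mathcal{B}_{\underline{n},q,\underline{t}}(a)\mathcal{B}_{\underline{m},q,\underline{t}}(a)$. Using the uniform dominant $2^{|\underline{N}|} d_{|\underline{N}|}(x)d_k(x)/x$ from Equation \eqref{eq: bound of mathcal B} and the pointwise limit $\mathcal{B}_{\underline{n},q,\underline{t}} \to \mathcal{B}_{\underline{n},\underline{t}}$, dominated convergence identifies this sum with $M_-(\underline{n},\underline{m})$ from Equation \eqref{eq: limiting moment}, in which $\mathbb{E}(\eta^n\overline{\eta}^m) = \mathbf{1}_{n=m}$, so only $n = m$ contributes.

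The remaining contributions, namely the $\mathbf{1}_{a \equiv -b}$ term, the shifts $a \equiv b + \kappa q$ with $\kappa \ne 0$, the full $n \neq m$ case with its oscillating $(u,v)$ sum, and the tail $\sum_{a > q^n}$ in the limiting formula, must each be bounded by $O_{\underline{n},\underline{m},k}(q^{-1/2+\epsilon})$. I would handle these by combining the divisor bound for $\mathcal{B}$ from Section \ref{s: bounding the moments} with the classical $d_r(x) \ll_{r,\epsilon} x^\epsilon$, following the treatment of an analogous Gauss-sum moment computation in Bober and Goldmakher \cite{bobergoldmakher2013}. The main obstacle will be extracting the $q^{-1/2+\epsilon}$ savings in the $n \ne m$ and $\kappa \ne 0$ cases: one must show that the residual additive phase $e\bigl((\sum u - \sum v)/q\bigr)$ provides enough cancellation, via completed character or Kloosterman-type sum bounds, to overcome the divisor-function swelling of $\mathcal{B}_{\underline{n},q,\underline{t}}(a)$ and $\mathcal{B}_{\underline{m},q,\underline{t}}(b)$ summed over $a, b$ up to $q^{\max(n,m)}$, yielding a genuine $q^{-1/2+\epsilon}$ power saving rather than a weaker $o(1)$.
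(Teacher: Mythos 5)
Your proposal follows essentially the same route as the paper: start from Equation \eqref{eq: Mq}, use odd-character orthogonality to reduce the $n=m$ case to the residue constraints $a\equiv\pm b\pmod q$, extract the diagonal $a=b$ as the main term, and treat $n\neq m$ by a Deligne-type bound on the completed Gauss-sum/Kloosterman average (the paper's Lemma~\ref{thm: granvillesoundlemma}, which first simplifies $\tau(\chi)^n\overline{\tau(\chi)}^m=q^m\tau(\chi)^{n-m}$ rather than expanding all $n+m$ Gauss sums, but this is the same idea). One small caution: your appeal to dominated convergence only gives $M_{q,-}\to M_-$, not the quantitative rate; to get $O(q^{-1/2+\epsilon})$ you need, as the paper does, the exact identity $\mathcal{B}_{\underline{n},q,\underline{t}}(a)=\mathcal{B}_{\underline{n},\underline{t}}(a)$ for $a\le q$ together with divisor-function bounds on the tail $\sum_{a>q}$ and on the shifted progressions $\gamma\neq 0$.
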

    
    \begin{proposition}\label{thm: moment convergence even}
         Let $k \geq 1$ be given and fix $\underline{t},\underline{n},\underline{m}$ as in Proposition \ref{thm: moment convergence}. Let
        \begin{align*}
            M_{q,+}(\underline{n},\underline{m}) = \frac{2}{\phi(q)} \sum_{\chi \even} \prod_{i=1}^k f_\chi(t_i)^{n_i} \overline{f_\chi(t_i)}^{m_i}.
        \end{align*}
        Then for all $\epsilon > 0$,
        \begin{align*}
            M_{q,+}(\underline{n},\underline{m}) = M_-(\underline{n},\underline{m}) + O_{\underline{n},\underline{m},k}\left( q^{-1/2 + \epsilon} \right),
        \end{align*}
        where
        \begin{align*}
            M_+(\underline{n},\underline{m}) = \mathbb{E}\left( \prod_{i=1}^k F_+(t_i)^{n_i} \overline{F_+(t_i)}^{m_i} \right).
        \end{align*}
        Importantly, $M_{q,+}(\underline{n},\underline{m}) \rightarrow M_+(\underline{n},\underline{m})$ as $q \rightarrow \infty$ through the primes.
    \end{proposition}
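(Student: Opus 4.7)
My plan is to mirror the odd-parity argument of Proposition \ref{thm: moment convergence}, tracking only the adjustments forced by even parity. The even-character Fourier expansion
\[
f_\chi(t) = -\frac{\tau(\chi)}{\pi\sqrt{q}}\sum_{k=1}^{q}\frac{\overline{\chi}(k)}{k}\sin(2\pi k t) + O\!\left(\frac{\log q}{\sqrt{q}}\right)
\]
has the same analytic shape as the odd one, with $\sin(2\pi k t)$ in place of $i^{-1}(1-\cos(2\pi k t))$ and an overall constant $(-1)^{n+m}$ in place of $i^{n-m}$. Repeating the derivation that led to \eqref{eq: Mq} produces a formula for $M_{q,+}(\underline n,\underline m)$ of the same shape, with the quantity $\mathcal{B}_{\underline N,q,\underline t}^{(+)}$ built from $\sin(2\pi y_j t_i)$-factors in place of $(1-\cos)$, and the inner $\chi$-average restricted to even characters.

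Next I would evaluate the inner character sum $\frac{2}{\phi(q)}\sum_{\chi \even}\overline\chi(a)\chi(b)\tau(\chi)^n\overline{\tau(\chi)}^m$. Expanding each Gauss sum via $\tau(\chi)=\sum_u \chi(u) e(u/q)$, and using $\overline{\tau(\chi)}=\tau(\overline\chi)$ for even $\chi$, this becomes an $(n+m)$-fold exponential sum multiplied by a single value $\chi(b u_1 \cdots u_n v_1^{-1}\cdots v_m^{-1} a^{-1})$. For prime $q$, orthogonality over the subgroup of even characters gives
\[
\frac{2}{\phi(q)}\sum_{\chi \even}\chi(r) = \mathbbm{1}_{r\equiv \pm 1 \pmod q} + O(1/\phi(q)),
\]
collapsing the $\chi$-sum to a $\pm 1$ congruence modulo $q$. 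This is precisely where the two parities genuinely differ: the odd case produces $\mathbbm{1}_{r\equiv 1}-\mathbbm{1}_{r\equiv -1}$, while here both branches enter additively, and the oddness of $\sin$ ensures that both branches combine into the correct main term matching $F_+$.

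From there the proof splits into a diagonal contribution, where the congruence is the equality $bu_1\cdots u_n = av_1\cdots v_m$, and an off-diagonal contribution. Rewriting $a=\prod_{i,j}a_{i,j}$ and $b=\prod_{i,j'}b_{i,j'}$, the diagonal reproduces exactly the series defining $M_+(\underline n,\underline m)$ via the even-parity analogue of \eqref{eq: limiting moment}. The off-diagonal contribution (including the $-1$ branch at unequal integer representatives, together with the negligible principal-character term) is controlled using the divisor bound \eqref{eq: bound of mathcal B}, yielding the error $O_{\underline n,\underline m,k}(q^{-1/2+\epsilon})$. The main obstacle is precisely this off-diagonal estimate: the variables $u_j,v_k$ range over intervals of length $q$, so the divisor-weighted count of solutions to $bu_1\cdots u_n\equiv \pm av_1\cdots v_m\pmod q$ with strict inequality must be shown to beat the trivial $q^{(n+m)/2}$ normalisation by a clean factor of $\sqrt q$. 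This combinatorial core is identical to the odd case from Proposition \ref{thm: moment convergence}, so the new work amounts to careful sign bookkeeping and the inclusion of the extra $\pm 1$ branch.
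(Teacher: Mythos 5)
Your proposal departs from the paper's proof in a way that introduces a genuine gap. The paper never expands all $n+m$ Gauss sums into a joint exponential sum. Instead it splits by cases on whether $|\underline n| = |\underline m|$. In the main case $n=m$, the key simplification is $\tau(\chi)^n\overline{\tau(\chi)}^n = |\tau(\chi)|^{2n} = q^n$ for primitive $\chi$, which removes the Gauss sums entirely and leaves only the character orthogonality $\frac{2}{\phi(q)}\sum_{\chi\text{ even}}\overline\chi(a)\chi(b) = \mathbbm{1}_{a\equiv b} + \mathbbm{1}_{a\equiv -b}$. This produces the two clean sums $\Sigma_+$ (which gives the main term) and $\Sigma_-$ (which is swallowed by the error, in both parities; the sign change between even and odd characters is irrelevant). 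The hyper-Kloosterman/Deligne input (Lemma \ref{thm: granvillesoundlemma}) is only used in the case $n\neq m$, where the entire expression is an error term.

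Your route expands every Gauss sum into $\tau(\chi)=\sum_u\chi(u)e(u/q)$ and then imposes the congruence $bu_1\cdots u_n\equiv\pm av_1\cdots v_m\pmod q$. Two problems follow. First, on your claimed ``diagonal'' $bu_1\cdots u_n = av_1\cdots v_m$ the exponential phases $e((u_1+\cdots+u_n-v_1-\cdots-v_m)/q)$ do not disappear, so the diagonal does not reduce to the series for $M_+$; you still have oscillating $(n+m)$-fold sums to control. Second, the off-diagonal bound you call ``the main obstacle'' cannot be obtained as a divisor-weighted solution count. Trivially, the congruence has about $q^{n+m-1}$ solutions with $u_j,v_j\in[1,q]$; after dividing by the normalisation $q^{(n+m)/2}$ this is $q^{(n+m)/2-1}$, which for $n+m\geq 3$ is far too large. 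Beating the normalisation by $\sqrt q$ requires genuine cancellation from the exponential phases, i.e.\ Deligne-type input, not a count. Your claim that this ``combinatorial core is identical to the odd case from Proposition \ref{thm: moment convergence}'' is therefore not accurate: the paper's odd case avoids the issue by never creating this expansion in the $n=m$ regime. Finally, your remark that ``the oddness of $\sin$ ensures that both branches combine into the correct main term'' is misleading; the $\Sigma_-$ branch is not combined with $\Sigma_+$, it is shown to be $O(q^{\epsilon-1}\log q)$ in both parities and discarded. The fix is simply to adopt the paper's two-case structure: keep the Gauss sums factored, use $|\tau(\chi)|^2 = q$ when $n=m$, and invoke Lemma \ref{thm: granvillesoundlemma} only when $n\neq m$.
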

    
    In this section we only look at the $\mathcal{F}_{q,-}$ case, where $\chi$ is odd. There are equivalent propositions and lemmas for the even case, where the proofs are analogous to the proofs shown in the section. In places where the proof differs, we will state the results for $\mathcal{F}_{q,+}$ and how it doesn't largely affect the proof.
    
    These propositions are sufficient to prove Theorem \ref{thm: finite distributions theorem}, showing $(\mathcal{F}_q(t))_{q \text{ prime}}$ converges in finite distributions to $F(t)$. We prove Proposition \ref{thm: moment convergence} as a combination of the following 2 propositions.
    
    \begin{proposition}\label{thm: prop moment seq to sum}
        Let $k \geq 1$ be given and $\underline{t} = (t_1, \dots, t_k)$, where $0 \leq t_1 < \dots < t_k \leq 1$, be fixed. Fix $\underline{n} = (n_1, \dots, n_k)$ and $\underline{m} = (m_1, \dots, m_k)$, where $n_i,m_i \in \mathbb{Z}_{\geq 0}$ and 
        $$n \coloneqq n_1 + n_2 + \cdots + n_k = m_1 + \cdots + m_k.$$ 
        The moment sequence defined in Proposition \ref{thm: moment convergence} can be expressed as
        \begin{align*}
            M_{q,-}(\underline{n},\underline{m}) = \frac{1}{\pi^{2n}} \sum_{a \geq 1} \mathcal{B}_{\underline{n},\underline{t}}(a) \mathcal{B}_{\underline{m},\underline{t}}(a)+ \mainerror,
        \end{align*}
       where $\mathcal{B}_{\underline{N},\underline{t}}$ is defined as
        \begin{align*}
            \mathcal{B}_{\underline{N},\underline{t}}(a) = \frac{1}{a} \sum_{x_1 \cdots x_k = a} \prod_{i=1}^k \left( \sum_{\substack{ y_1 \cdots y_{N} = x_i}} \prod_{j = 1}^{N_i} ( 1- \cos(2 \pi y_j t) )\right).
        \end{align*}
    \end{proposition}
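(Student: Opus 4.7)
The plan is to start from the expanded form~\eqref{eq: Mq} for $M_{q,-}(\underline{n},\underline{m})$, evaluate the character average over odd $\chi$ modulo prime $q$ explicitly using Gauss-sum magnitudes and orthogonality, and then reduce the resulting double sum to its diagonal, finally identifying the diagonal with $\pi^{-2n}\sum_{a}\mathcal{B}_{\underline{n},\underline{t}}(a)\mathcal{B}_{\underline{m},\underline{t}}(a)$ up to the stated error.

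For the character average, every nonprincipal character modulo the odd prime $q$ is primitive with $|\tau(\chi)|^{2}=q$, and the principal character is even so does not appear in the odd sum. The hypothesis $n=m$ then gives $\tau(\chi)^{n}\overline{\tau(\chi)}^{n}=q^{n}$ uniformly, and the parity projector $\mathbbm{1}_{\chi\odd}=(1-\chi(-1))/2$ combined with orthogonality produces, for $(ab,q)=1$,
\begin{align*}
\frac{2}{\phi(q)}\sum_{\chi\odd}\overline{\chi}(a)\chi(b)\,\tau(\chi)^{n}\overline{\tau(\chi)}^{n}\;=\;q^{n}\bigl(\mathbbm{1}_{a\equiv b\,(q)}-\mathbbm{1}_{a\equiv -b\,(q)}\bigr).
\end{align*}
Since $(\pi\sqrt{q})^{-2n}q^{n}=\pi^{-2n}$, substitution into~\eqref{eq: Mq} turns the main term into
\begin{align*}
\frac{1}{\pi^{2n}}\sum_{\substack{1\leq a,b\leq q^{n}\\(ab,q)=1}}\mathcal{B}_{\underline{n},q,\underline{t}}(a)\,\mathcal{B}_{\underline{m},q,\underline{t}}(b)\bigl(\mathbbm{1}_{a\equiv b\,(q)}-\mathbbm{1}_{a\equiv -b\,(q)}\bigr),
\end{align*}
with the $O((\log q)^{2n}/\sqrt{q})$ error of~\eqref{eq: Mq} inherited.

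Next I would split into the diagonal $a=b$ and the rest. The second indicator $\mathbbm{1}_{a\equiv -b\,(q)}$ never hits the diagonal since $a,b\geq1$ forces $a+b\geq q$, and the first indicator splits into $a=b$ plus pairs with $|a-b|\geq q$. For each off-diagonal piece I would use the pointwise bound $|\mathcal{B}_{\underline{n},q,\underline{t}}(a)|\ll_{\epsilon}a^{-1+\epsilon}$ from~\eqref{eq: bound of mathcal B} and group $a,b$ by their common residue $r\in[1,q-1]$ modulo $q$, writing $a=r+iq$ and $b=r+jq$ (respectively $b=(q-r)+jq$). Separating the contribution according to whether $\min(i,j)=0$ or $i,j\geq1$, and using $\sum_{r=1}^{q-1}r^{-1+\epsilon}\ll q^{\epsilon}$ together with $\sum_{i=1}^{q^{n-1}}(iq)^{-1+\epsilon}\ll q^{-1+n\epsilon}$, one gets a total off-diagonal bound of $O_{\epsilon}(q^{-1+O(\epsilon)})$, comfortably inside the allowed error.

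It remains to identify the diagonal $\pi^{-2n}\sum_{a\leq q^{n},(a,q)=1}\mathcal{B}_{\underline{n},q,\underline{t}}(a)\mathcal{B}_{\underline{m},q,\underline{t}}(a)$ with $\pi^{-2n}\sum_{a\geq1}\mathcal{B}_{\underline{n},\underline{t}}(a)\mathcal{B}_{\underline{m},\underline{t}}(a)$. The tail $a>q^{n}$ contributes $O_{\epsilon}(q^{-n(1-\epsilon)})$ and the $a$ with $q\mid a$ contribute $O_{\epsilon}(q^{-2+\epsilon})$, both by~\eqref{eq: bound of mathcal B}. The internal truncations $y_{j}\leq q$ and $x_{i}\leq q^{N_{i}}$ inside $\mathcal{B}_{\underline{n},q,\underline{t}}$ are the delicate part: the discarded factorisations of $a$ must contain at least one divisor $d>q$, and the Rankin-style estimate
\begin{align*}
\#\{d\mid a\,:\,d>q\}\;\leq\;\sum_{d\mid a}(d/q)^{1/2}\;\ll_{\epsilon}\;a^{1/2+\epsilon}\,q^{-1/2}
\end{align*}
yields $|\mathcal{B}_{\underline{n},\underline{t}}(a)-\mathcal{B}_{\underline{n},q,\underline{t}}(a)|\ll_{\epsilon}a^{-1/2+\epsilon}q^{-1/2}$. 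Multiplying by $|\mathcal{B}_{\underline{m},\underline{t}}(a)|\ll_{\epsilon}a^{-1+\epsilon}$ and summing in $a$ gives $O_{\epsilon}(q^{-1/2+\epsilon})$. The main obstacle is precisely this Rankin step: a naive tail bound would give only $O_{\epsilon}(q^{-\epsilon})$, so exploiting the convexity exponent $1/2$ in the Rankin inequality is what ultimately delivers the $(\log q)^{2n}/\sqrt{q}$ precision required by the statement.
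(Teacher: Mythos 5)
Your evaluation of the odd-character average and the resulting diagonal/off-diagonal decomposition match the paper's strategy, and your off-diagonal bound via the pointwise estimate $|\mathcal{B}_{\underline{N},q,\underline{t}}(x)|\ll_\epsilon x^{-1+\epsilon}$ is essentially the paper's $\Omega$-estimate in a different notation. The substantive divergence --- and the one place where your argument falls short of the stated error --- is in the handling of the truncations inside $\mathcal{B}_{\underline{N},q,\underline{t}}$. The paper writes $a = a_0 + \gamma_1 q$, $b = b_0 + \gamma_2 q$ with $1\le a_0,b_0<q$ and isolates the $\gamma_1=\gamma_2=0$ block; the resulting diagonal runs only over $1\le a<q$, and there \emph{every} divisor of $a$ is already below $q$, so the constraints $y_j\le q$ (hence $x_i\le q^{N_i}$) are vacuous and $\mathcal{B}_{\underline{N},q,\underline{t}}(a)=\mathcal{B}_{\underline{N},\underline{t}}(a)$ exactly, with no error at all. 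One then only needs an easy $O_\epsilon(q^{-1+\epsilon})$ tail to pass from $a<q$ to $a\ge 1$. Your decomposition instead keeps $a=b$ up to $q^n$ in the diagonal, which forces you to compare truncated and untruncated $\mathcal{B}$ on $q\le a\le q^n$, where they genuinely differ. The Rankin step you invoke is correct in spirit (though what you need is a bound on bad factorisations, not just $\#\{d\mid a : d>q\}$), but it is both unnecessary and lossy: for fixed $\epsilon>0$ your $O_\epsilon(q^{-1/2+\epsilon})$ exceeds the target $O((\log q)^{2n}/\sqrt q)$, whereas simply noting that the discrepancy vanishes for $a<q$ and bounding both $\mathcal{B}$'s crudely by $\ll_\epsilon a^{-1+\epsilon}$ for $a\ge q$ gives $\sum_{a\ge q}a^{-2+2\epsilon}\ll q^{-1+2\epsilon}$, comfortably inside the budget. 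So the ``main obstacle'' you identify is self-inflicted by the choice of decomposition. One smaller point: the justification ``$a,b\ge 1$ forces $a+b\ge q$'' for dismissing $\mathbbm{1}_{a\equiv -b\,(q)}$ on the diagonal is not correct as stated; the right reason is that $a=b$ and $q\mid 2a$ with $q$ odd forces $q\mid a$, where $\mathcal{B}_{\underline{N},q,\underline{t}}$ vanishes by definition (and the paper does have a genuine $a\equiv -b$ contribution with $b=q-a$ off the diagonal, which it bounds separately as $\Sigma_-$).
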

    
    \begin{proposition}\label{thm: prop expectation to sum}
        Let $k \geq 1$ be given and $\underline{t} = (t_1, \dots, t_k)$, where $0 \leq t_1 < \dots < t_k \leq 1$, be fixed. Fix $\underline{n} = (n_1, \dots, n_k)$ and $\underline{m} = (m_1, \dots, m_k)$, where $n_i,m_i \in \mathbb{Z}_{\geq 0}$ and 
        $$n \coloneqq n_1 + n_2 + \cdots + n_k = m_1 + \cdots + m_k.$$ 
        Then 
        \begin{align*}
            M_-(\underline{n},\underline{m}) = \frac{1}{\pi^{2n}} \sum_{a \geq 1} \mathcal{B}_{\underline{n},\underline{t}}(a) \mathcal{B}_{\underline{m},\underline{t}}(a),
        \end{align*}
       where $\mathcal{B}_{\underline{N},\underline{t}}$ is defined as in Proposition \ref{thm: prop moment seq to sum}.
    \end{proposition}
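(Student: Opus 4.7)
The plan is to retrace the derivation already sketched in Section \ref{s: defn of moments} for the limiting moment and observe that the hypothesis $n=m$ trivializes the $\eta$-factor, leaving exactly the stated identity. Concretely, I would first expand each $F_-(t_i)$ using the partial symmetric sum representation justified at the end of Section \ref{s: Properties of F}, writing
\begin{align*}
F_-(t_i)^{n_i}\overline{F_-(t_i)}^{m_i} = \frac{\eta^{n_i}\overline{\eta}^{m_i}}{\pi^{n_i+m_i}} \sum_{a_{i,1},\dots,a_{i,n_i}\geq 1}\sum_{b_{i,1},\dots,b_{i,m_i}\geq 1} \prod_{j=1}^{n_i}\frac{X_{a_{i,j}}(1-\cos(2\pi a_{i,j}t_i))}{a_{i,j}} \prod_{j'=1}^{m_i}\frac{\overline{X_{b_{i,j'}}}(1-\cos(2\pi b_{i,j'}t_i))}{b_{i,j'}},
\end{align*}
and then take the product over $i\in[1,k]$.

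The second step is to justify bringing the expectation inside the multiple sum. Since $\mathbb{E}(|F_-(t)|^{n+m})$ is finite (this was verified right after Definition \ref{def: Mq} using the tail bound of Bober--Goldmakher--Granville--Koukoulopoulos), the product $\prod_i F_-(t_i)^{n_i}\overline{F_-(t_i)}^{m_i}$ is integrable, so by Lebesgue's dominated convergence theorem the expectation can be exchanged with the finite/partial sums. After passing to the limit in the symmetric truncation, the resulting double series is absolutely convergent once the pairing $X_a\overline{X_b}$ is replaced by its expectation, as shown in Section \ref{s: defn of moments}.

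Third, I would compute the factors separately. By complete multiplicativity of the Steinhaus random multiplicative functions together with the independence and zero-mean property of the $X_p$, we have $\mathbb{E}(X_a\overline{X_b}) = \mathbbm{1}_{a=b}$ where $a = \prod_{i,j}a_{i,j}$ and $b=\prod_{i,j'}b_{i,j'}$. For the unit-modulus random variable $\eta$, uniformity on the unit circle gives $\mathbb{E}(\eta^{n}\overline{\eta}^{m}) = \mathbbm{1}_{n=m}$; by the standing hypothesis $n=m$, this factor is exactly $1$, and $\pi^{n+m}$ becomes $\pi^{2n}$.

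Fourth, I would collapse the indices to recover the claimed expression. Grouping the $a_{i,j}$ with fixed $i$ into $x_i = \prod_j a_{i,j}$ yields $\beta_{n_i,t_i}(x_i)$ by Equation \eqref{eq: beta for limiting moment}, and grouping the $x_i$ into $a = \prod_i x_i$ yields $\mathcal{B}_{\underline{n},\underline{t}}(a)$ by Equation \eqref{eq: mathcal B for limiting moment}. The analogous regrouping of the $b_{i,j'}$ under the constraint $b=a$ produces $\mathcal{B}_{\underline{m},\underline{t}}(a)$, leaving exactly $\frac{1}{\pi^{2n}}\sum_{a\geq 1}\mathcal{B}_{\underline{n},\underline{t}}(a)\mathcal{B}_{\underline{m},\underline{t}}(a)$. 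The main obstacle I expect is making the interchange of expectation and summation rigorous in spite of $F_-$ being only conditionally convergent; the key point is that absolute convergence is only needed after the orthogonality relation has reduced the double sum to a diagonal, at which point the bound $\mathcal{B}_{\underline{n},\underline{t}}(a)\ll 2^n d_n(a)d_k(a)/a$ from Equation \eqref{eq: bound of mathcal B} combined with standard estimates on $\sum d_n(a)^2/a^{2-\epsilon}$ controls the tail uniformly.
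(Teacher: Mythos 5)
Your proposal is correct and follows essentially the same route as the paper: the paper's own proof of this proposition is a one-step invocation of Equation~\eqref{eq: limiting moment} (derived in Section~\ref{s: defn of moments} by exactly the expansion, dominated-convergence interchange, and orthogonality argument you spell out), followed by the observation that $\mathbb{E}(\eta^n\overline{\eta}^m)=1$ when $n=m$. You have simply inlined the Section~\ref{s: defn of moments} derivation rather than citing it, so the content is the same.
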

    
    Before the proof of the propositions, we will use them to prove Proposition \ref{thm: moment convergence}.
    
    \begin{proof}[Proof of Proposition \ref{thm: moment convergence}]
    
        Take $n = n_1 + n_2 + \cdots + n_k$ and $m = m_1 + \cdots m_k$. We split the proof into 2 cases: $n = m$ and $n \neq m$. The first case has already been shown by Propositions \ref{thm: prop moment seq to sum} and \ref{thm: prop expectation to sum}:
        \begin{align*}
            M_{q,-}(\underline{n},\underline{m}) &= \frac{1}{\pi^{2n}} \sum_{a \geq 1} \mathcal{B}_{\underline{n},\underline{t}}(a) \mathcal{B}_{\underline{m},\underline{t}}(a) + \mainerror \\
            &= \mathbb{E}\left( \prod_{i=1}^k F(t_i)^{n_i} \overline{F(t_i)}^{m_i} \right) + \mainerror.
        \end{align*}
        
        Therefore, the only case left to show is when $n \neq m$. We recall Equation \eqref{eq: limiting moment}:
        \begin{align*}
            M_-(\underline{n},\underline{m}) = \mathbb{E}\left(\frac{\eta^{n}\overline{\eta}^m}{\pi^{n+m}}\right) \sum_{a \geq 1} \mathcal{B}_{\underline{n},\underline{t}}(a) \mathcal{B}_{\underline{m},\underline{t}}(a).
        \end{align*}
        Since $\eta$ is uniformly distributed on the unit circle, $\mathbb{E}(\eta^n\overline{\eta}^m) = 0$ and the moment $M_-$ vanishes. Therefore, to conclude the proof, we need to show the moment $M_{q,-}\rightarrow 0$ as $q \rightarrow \infty$. As shown in Equation \eqref{eq: Mq}, we can write $M_{q,-}(\underline{n},\underline{m})$ as
        \begin{align*}
            M_{q,-}(\underline{n},\underline{m}) =& \frac{1}{(\pi \sqrt{q})^{n + m}} \sum_{\substack {1 \leq a \leq q^{n} \\ 1 \leq b \leq q^{m} }}\left( \mathcal{B}_{\underline{n},q,\underline{t}}(a)\mathcal{B}_{\underline{m},q,\underline{t}}(b) \right) \frac{2}{\phi(q)} \sum_{\chi} \overline{\chi}(a)\chi(b) \tau(\chi)^{n} \overline{\tau(\chi)}^m\\
             &+ O \left( \frac{(\log q)^{n+m}}{\sqrt{q}} \right).
        \end{align*}
        Assuming $n > m$, we rewrite $\tau(\chi)^n\overline{\tau(\chi)}^m$ as $q^m \tau(\chi)^{n-m}$. Therefore, taking $\chi(\overline{a}) \coloneqq \overline{\chi}(a)$,
        \begin{align*}
             \frac{2}{\phi(q)} \sum_{\chi} \overline{\chi}(a)\chi(b) \tau(\chi)^{n} \overline{\tau(\chi)}^m = \frac{2q^m}{\phi(q)} \sum_{\chi} \chi(\overline{a}\cdot b) \tau(\chi)^{n-m}.
        \end{align*}
       \begin{lemma}\label{thm: granvillesoundlemma}
            For $N \in \mathbb{N}$,
            \begin{align*}
        \frac{2}{\phi(q)} \bigg | \sum_{\substack{\chi \mod q \\ \chi(-1) = \sigma}} \chi(a) \tau(\chi)^N \bigg | \leq 2N q^{(N-1)/2},
            \end{align*}
            where $\sigma =\{1,-1\}$.
        \end{lemma}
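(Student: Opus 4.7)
The plan is to expand $\tau(\chi)^N$ multiplicatively and then use orthogonality of characters restricted by parity to reduce everything to a hyper--Kloosterman sum, where the classical bound of Deligne will give exactly the stated estimate.

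First I would expand the Gauss sum: since $q$ is prime, $\chi$ is primitive, and $\chi(0)=0$, we have
\begin{align*}
\tau(\chi)^N = \sum_{b_1,\dots,b_N \in (\mathbb{Z}/q\mathbb{Z})^*} \chi(b_1\cdots b_N)\, e\!\left(\frac{b_1+\cdots+b_N}{q}\right).
\end{align*}
Multiplying by $\chi(a)$ and summing over characters of fixed parity, I would apply the orthogonality identity
\begin{align*}
\frac{2}{\phi(q)} \sum_{\substack{\chi \bmod q \\ \chi(-1)=\sigma}} \chi(n) = \mathbbm{1}_{n\equiv 1 \,(q)} + \sigma\cdot \mathbbm{1}_{n\equiv -1 \,(q)}.
\end{align*}
Applying this with $n = ab_1\cdots b_N$ collapses the character average into two complete exponential sums, one over the subvariety $ab_1\cdots b_N\equiv 1\pmod q$ and one over $ab_1\cdots b_N \equiv -1 \pmod q$, each weighted by $\pm 1$ or $\pm\sigma$.

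Next, I would identify each of these two sums as a hyper--Kloosterman sum. After substituting $c = \pm a^{-1}\pmod q$, the inner sum becomes
\begin{align*}
\mathrm{Kl}_N(c;q) \;=\; \sum_{\substack{b_1,\dots,b_N \in (\mathbb{Z}/q\mathbb{Z})^* \\ b_1\cdots b_N \equiv c\,(q)}} e\!\left(\frac{b_1+\cdots+b_N}{q}\right),
\end{align*}
and I would invoke Deligne's bound $|\mathrm{Kl}_N(c;q)| \leq N\, q^{(N-1)/2}$ (sharpening Weil's classical Kloosterman bound, to which it reduces for $N=2$). Combining the two pieces (one from $+1$, one from $-1$) and applying the triangle inequality yields the clean bound $2N q^{(N-1)/2}$ stated in the lemma.

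The main obstacle is of course the analytic input, Deligne's estimate on hyper--Kloosterman sums, which is nontrivial for $N \geq 3$; however, since this is standard and quotable (and appears in the Granville--Soundararajan style arguments the lemma is modeled on), the core of the proof is the purely algebraic identification of the parity--restricted character average with a hyper--Kloosterman sum. Once that identification is set up correctly, the factor $2$ in the bound comes for free from splitting into the $\pm 1$ residue classes, and no delicate combinatorics is required.
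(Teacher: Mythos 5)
Your proposal is correct and follows essentially the same route as the paper: expand $\tau(\chi)^N$ as a product of complete sums, apply the parity-restricted orthogonality relation (giving the indicator of $a b_1\cdots b_N\equiv\pm1\pmod q$), identify the two resulting sums as hyper--Kloosterman sums, and invoke Deligne's bound $|\mathrm{Kl}_N| \leq Nq^{(N-1)/2}$. The only difference is cosmetic (notation and the explicit statement of the orthogonality identity); the paper's proof is exactly this, modelled on Granville--Soundararajan's Lemma 8.3.
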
    
       This lemma is a slight generalisation of a result by Granville and Soundararajan \cite[Lemma 8.3]{granvillesoundararajan2007} and uses Deligne's bound on hyper-Kloosterman sums. Below follows Granville and Soundararajan's proof, with a modification to include when $\chi$ is even.
        \begin{proof} 
        Firstly, we rewrite the sum as exponential sums, using orthogonality of characters and the definition of the Gauss sum $\tau(\chi)$:
        \begin{align*}
            \frac{2}{\phi(q)} \sum_{\substack{\chi \mod q \\ \chi(-1) = \sigma}} \chi(a) \tau(\chi)^N = \sum_{\substack{x_1, \dots, x_N\mod q \\ x_1 \cdots x_N \equiv \overline{a} \mod q}} e\left( \frac{x_1 + \cdots + x_N}{q}\right) + \text{sgn}(\sigma) \sum_{\substack{x_1, \dots, x_N\mod q \\ x_1 \cdots x_N \equiv -\overline{a} \mod q}} e\left( \frac{x_1 + \cdots + x_N}{q}\right).
        \end{align*}
        Then, using Deligne's bound \cite{deligne1977}
        \begin{align*}
           \bigg | \sum_{\substack{x_1, \dots, x_N \mod q \\ x_1 \cdots x_N \equiv b \mod q}} e\left( \frac{x_1 + \cdots + x_N}{q}\right) \bigg | \leq N q^{(N-1)/2},
        \end{align*}
        we have proved the lemma.
        \end{proof}
       
       As a result, we have the inequality
       \begin{align*}
           |M_{q,-}(\underline{n},\underline{m})| \leq& \frac{2(n-m)}{\pi^{(n + m)}\sqrt{q}} \sum_{\substack {1 \leq a \leq q^{n} \\ 1 \leq b \leq q^{m}}} |\mathcal{B}_{\underline{n},q,\underline{t}}(a)||\mathcal{B}_{\underline{m},q,\underline{t}}(b)| + O \left( \frac{(\log q)^{n+m}}{\sqrt{q}} \right).
       \end{align*}
        
        We also have the bound on $\mathcal{B}$, as shown in Equation \eqref{eq: bound of mathcal B},
        \begin{align*}
             \mathcal{B}_{\underline{N},q,\underline{t}}(x) \leq \frac{2^N d_N(x)d_k(x)}{x}.
        \end{align*}
        Therefore, trivially bounding both divisor functions by $q^\epsilon$ for $\epsilon > 0$,
        \begin{align*}
            \sum_{\substack {1 \leq a \leq q^{n}}} |\mathcal{B}_{\underline{n},q,\underline{t}}(a)| \ll 2^n q^\epsilon \sum_{1 \leq a \leq q^n} \frac{1}{a} \leq 2^n q^\epsilon \log(q^n).
        \end{align*}
        We get an analogous result for $\sum_{\substack {1 \leq b \leq q^{m}}} |\mathcal{B}_{\underline{m},q,\underline{t}}(b)|$. As a result,
        \begin{align*}
            M_{q,-}(\underline{n},\underline{m}) \ll\frac{2^{1 + n + m}(n-m)q^\epsilon\log(q^n)\log(q^m)}{\pi^{n+m} \sqrt{q}}  + O \left( \frac{(\log q)^{n+m}}{\sqrt{q}} \right) \ll_{n,m} q^{-1/2 + \epsilon},
        \end{align*}
        which tends to zero as $q \rightarrow \infty$. By a similar method we can show this is also the case when $n < m$. Therefore Proposition \ref{thm: moment convergence} holds.
    \end{proof}
    
    Having proven Proposition \ref{thm: moment convergence} assuming Propositions \ref{thm: prop moment seq to sum} and \ref{thm: prop expectation to sum}, we will now prove both results, showing when $|\underline{n}|= |\underline{m}|$ both $\lim_{q \rightarrow \infty} M_q$ and $M$ equal
    \begin{align*}
        \frac{1}{\pi^{2n}} \sum_{a \geq 1} \mathcal{B}_{\underline{n},\underline{t}}(a) \mathcal{B}_{\underline{m},\underline{t}}(a).
    \end{align*}
    Recall that we are only proving the odd character case, as the proof is analogous for $\mathcal{F}_{q,+}$. For odd characters, $\mathcal{B}_{N,q,t}(x)$ is defined in Section \ref{s: defn of moments} as
    \begin{align*}
        \sum_{\substack{x_1 \cdots x_k = x \\ x_i \leq q^{N_i}}} \prod_{i=1}^k \beta_{N_i,q,t_i}(x_i) =  \sum_{\substack{x_1 \cdots x_k = x \\ x_i \leq q^{N_i}}} \frac{1}{x} \prod_{i=1}^k\sum_{\substack{ y_1 \cdots y_{N_i} = x_i \\ y_i \leq q}} \prod_{j = 1}^{N_i} ( 1- \cos(2 \pi y_j t) ).
    \end{align*}
    For ease of notation, in the proofs we refer to $M_{q,-}$ and $M_-$ as $M_q$ and $M$ respectively.
    
     \begin{proof}[Proof of Proposition \ref{thm: prop moment seq to sum}]
         Taking $n = m$, where
         \begin{align*}
             &n \coloneqq |\underline{n}| = n_1 + \cdots + n_k, &m \coloneqq |\underline{m}| = m_1 + \cdots + m_k,
         \end{align*}
         we rewrite Equation \eqref{eq: Mq} to
         \begin{align*}
             M_q\left( \underline{n}, \underline{m}\right)= \frac{1}{\pi^{2n}} \sum_{\substack {1 \leq a, b\leq q^{n}}}\left( \mathcal{B}_{\underline{n},q,\underline{t}}(a) \mathcal{B}_{\underline{m},q,\underline{t}}(b) \right) \frac{2}{\phi(q)} \sum_{\chi \text{ odd}} \overline{\chi}(a)\chi(b) + O\left(\frac{(\log q)^{2n}}{\sqrt{q}}\right),
         \end{align*}
         where $\mathcal{B}_{\underline{n},q,\underline{t}}$ is defined as in Equation \eqref{eq: definition of mathcal b}.
         
         Using the orthogonality of $\chi$, and noting we are only summing over odd characters $\chi$ modulo $q$\footnote{The method for the even character case would differ here. Firstly recall $\mathcal{B}$ involves $\sin$ instead of $(1 - \cos)$ in the even case. Additionally, since these are even characters, we would have $M_q = \frac{1}{\pi^{2n}} \Sigma_+ + \frac{1}{\pi^{2n}}\Sigma_- + O\left( (\log q)^{2n}q^{-1/2} \right)$, where we are adding the $\Sigma_-$ term instead of subtracting it. However, the $\Sigma_-$ term is eventually swallowed by the error term, so this doesn't affect the end result.}, the moment sequence becomes
         \begin{align}\label{eq: moment sequence with the sigmas}
             M_q(\underline{n},\underline{m}) =& \frac{1}{\pi^{2n}}             \Sigma_{+}(\underline{n},\underline{m})
              - \frac{1}{\pi^{2n}} \Sigma_{-}(\underline{n},\underline{m})
             + O \left( \frac{(\log q)^{2n}}{\sqrt{q}} \right),
         \end{align}
         where
         \begin{align*}
             \Sigma_{+}(\underline{n},\underline{m}) \coloneqq \sum_{\substack{1 \leq a,b \leq q^n \\ a \equiv b \mod q}} \mathcal{B}_{\underline{n},q,\underline{t}}(a)\mathcal{B}_{\underline{m},q,\underline{t}}(b),&
             &\Sigma_{-}(\underline{n},\underline{m}) \coloneqq \sum_{\substack{1 \leq a,b \leq q^n \\ a \equiv - b \mod q}} \mathcal{B}_{\underline{n},q,\underline{t}}(a)\mathcal{B}_{\underline{m},q,\underline{t}}(b).
         \end{align*}
         
         The aim is to get the main sum independent of $q$. Using ideas from Bober and Goldmakher \cite[Proof of Lemma 4.1]{bobergoldmakher2013}, we consider $\Sigma_+$ and $\Sigma_-$ simultaneously. First we split the sums into arithmetic progressions mod $q$, noting that $\mathcal{B}_{\underline{n},q,\underline{t}}(x)$ vanishes when $q|x$, 
         \begin{align*}
              \Sigma_{\pm}(\underline{n},\underline{m}) &= \sum_{\substack{1 \leq a,b < q \\ a \equiv \pm b \mod q}} \sum_{0 \leq \gamma_1, \gamma_2 < q^{n-1}} \mathcal{B}_{\underline{n},q,\underline{t}}(a + \gamma_1 q)\mathcal{B}_{\underline{m},q,\underline{t}}(b + \gamma_2 q).
         \end{align*}
        
        We simplify $\Sigma_{\pm}$ by splitting the inner sum into $\gamma_1 = \gamma_2 = 0$, $\gamma_1 \neq 0$, and $\gamma_2 \neq 0$:
        \begin{align*}
            \sum_{0 \leq \gamma_1, \gamma_2 < q^{n-1}} &\mathcal{B}_{\underline{n},q,\underline{t}}(a + \gamma_1 q)\mathcal{B}_{\underline{m},q,\underline{t}}(b + \gamma_2 q) \\
            &= \left( \mathcal{B}_{\underline{n},q,\underline{t}}(a) \mathcal{B}_{\underline{m},q,\underline{t}}(b)\right) + \sum_{j=1}^2 \sum_{\substack{0 \leq \gamma_1, \gamma_2 < q^{n-1}\\\gamma_j \neq 0}} \mathcal{B}_{\underline{n},q,\underline{t}}(a + \gamma_1 q)\mathcal{B}_{\underline{m},q,\underline{t}}(b + \gamma_2 q).
        \end{align*}
    
    For ease of notation, we define the above latter sum as
    \begin{align*}
        \Omega = \sum_{j=1}^2 \sum_{\substack{0 \leq \gamma_1, \gamma_2 < q^{n-1}\\\gamma_j \neq 0}} \mathcal{B}_{\underline{n},q,\underline{t}}(a + \gamma_1 q)\mathcal{B}_{\underline{m},q,\underline{t}}(b + \gamma_2 q).
    \end{align*}
    We can bound $\Omega$ by using the bound of $\mathcal{B}$ shown in Section \ref{s: bounding the moments}, so
    \begin{align*}
       \Omega
        &\leq 2^{2n}\sum_{j=1}^2 \sum_{\substack{0 \leq \gamma_1, \gamma_2 < q^{n-1}\\\gamma_j \neq 0}} \frac{ d_n(a+ \gamma_1 q) d_k(a+\gamma_1 q)}{a + \gamma_1 q}  \frac{d_n(b+\gamma_2 q) d_k(b + \gamma_2 q)}{b + \gamma_2 q} .
    \end{align*}
    By bounding the divisor functions by $O_{n,k}(q^{\epsilon})$, we can further bound the sum to 
    \begin{align*}
          \Omega \ll_{n,k} q^\epsilon \sum_{j=1}^2\sum_{\substack{0 \leq \gamma_1, \gamma_2 < q^{n-1}\\\gamma_j \neq 0}} \frac{1}{a + \gamma_1 q}  \frac{1}{b + \gamma_2 q}.
    \end{align*}
    We can use the bound on partial harmonic series,
    \begin{align*}
        \omega_x \coloneqq \sum_{\gamma = 1}^{q^{n-1}} \frac{1}{x + \gamma q} \leq \frac{\log(q^{n-1})}{q},
    \end{align*}
    to further bound $\Omega$. As a result,
    \begin{align*}
        \sum_{j=1}^2\sum_{\substack{0 \leq \gamma_1, \gamma_2 < q^{n-1}\\\gamma_j \neq 0}} \frac{1}{a + \gamma_1 q}  \frac{1}{b + \gamma_2 q} 
        = \left(\frac{1}{a} + \omega_a \right)\omega_b + \omega_a\left(\frac{1}{b} + \omega_b \right) 
        \leq \frac{\log(q^{n-1})}{q} \left(\frac{1}{a} +\frac{1}{b} + \frac{2\log(q^{n-1})}{q} \right).
    \end{align*}
    Therefore $\Sigma_{\pm}$ can be written as,
    \begin{align*}
        \Sigma_{\pm} 
        &= \sum_{\substack{1 \leq a,b < q \\ a \equiv \pm b \mod q}} \left( \mathcal{B}_{\underline{n},q,\underline{t}}(a) \mathcal{B}_{\underline{m},q,\underline{t}}(b)\right) + O_{n,k} \left(\frac{ q^\epsilon \log(q^{n-1})}{q} \sum_{\substack{1 \leq a,b < q \\ a \equiv \pm b \mod q}} \left( \frac{1}{a} + \frac{1}{b} + \frac{2\log(q^{n-1})}{q}\right)\right).
    \end{align*}
    
    For $\Sigma_+$ we have $a \equiv +b \mod q$ and $1 \leq a,b \leq q$. Therefore $a = b$ and we have
    \begin{align*}
        \Sigma_{+} 
        =& \sum_{\substack{1 \leq a < q}} \left( \mathcal{B}_{\underline{n},q,\underline{t}}(a) \mathcal{B}_{\underline{m},q,\underline{t}}(a)\right) + O_{n,k} \left(\frac{ q^\epsilon \log(q^{n-1})}{q} \sum_{\substack{1 \leq a < q }} \left( \frac{2}{a} + \frac{2\log(q^{n-1})}{q}\right)\right).
    \end{align*}
    For $\Sigma_-$ we have $1\leq a,b \leq q$ and $a \equiv -b \mod q$. Therefore $b = q - a$ and
    \begin{align*}
        \Sigma_{-} 
        =& \sum_{\substack{1 \leq a < q}} \left( \mathcal{B}_{\underline{n},q,\underline{t}}(a) \mathcal{B}_{\underline{m},q,\underline{t}}(q-a)\right) + O_{n,k} \left(\frac{  q^\epsilon \log(q^{n-1})}{q} \sum_{\substack{1 \leq a < q }} \left( \frac{1}{a} + \frac{1}{q-a} + \frac{2\log(q^{n-1})}{q}\right)\right).
    \end{align*}
    We bound the partial harmonic series again by $\log q$ to simplify both errors for $\Sigma_+$ and $\Sigma_-$. Consequently both error terms above can be bounded by
    \begin{align*}
        O_{n,k}\left(\frac{ \log(q^{n-1})}{q^{1-\epsilon}}\left(2\log q + 2\log(q^{n-1})\right)\right).
    \end{align*}
     
    By combining the error terms, the moment sequence from Equation \eqref{eq: moment sequence with the sigmas} is\footnote{Recall we are add the $\Sigma_-$ term instead of subtracting it in the even character case.}
        \begin{align*}
              M_q(\underline{n},\underline{m}) =& \frac{1}{\pi^{2n}}             \Sigma_{+}(\underline{n},\underline{m})
              - \frac{1}{\pi^{2n}} \Sigma_{-}(\underline{n},\underline{m})
             + O \left( \frac{(\log q)^{2n}}{\sqrt{q}} \right) \\
             =& \frac{1}{\pi^{2n}}\sum_{\substack{1 \leq a \leq q}} \left( \mathcal{B}_{\underline{n},q,\underline{t}}(a) \mathcal{B}_{\underline{m},q,\underline{t}}(a)\right) - \frac{1}{\pi^{2n}}\sum_{\substack{1 \leq a \leq q}} \left( \mathcal{B}_{\underline{n},q,\underline{t}}(a) \mathcal{B}_{\underline{m},q,\underline{t}}(q-a)\right) \\
             &+ O_{n,k}\left(\frac{2^{2n + 2}(\log q^{n-1})^2}{q^{1-\epsilon}}\right) + O \left( \frac{(\log q)^{2n}}{\sqrt{q}} \right).
        \end{align*}
        
        Our aim is to only have one main term,
        \begin{align*}
            \frac{1}{\pi^{2n}}\sum_{\substack{a \geq 1}} \left( \mathcal{B}_{\underline{n},\underline{t}}(a) \mathcal{B}_{\underline{m},\underline{t}}(a)\right),
        \end{align*}
        so we want to first bound the term
        \begin{align}\label{eq: sigma minus}
            \frac{1}{\pi^{2n}}\sum_{\substack{1 \leq a \leq q}} \left( \mathcal{B}_{\underline{n},q,\underline{t}}(a) \mathcal{B}_{\underline{m},q,\underline{t}}(q-a)\right),
        \end{align}
        and then extend the sum 
        \begin{align*}
               \sum_{\substack{1 \leq a \leq q}} \left( \mathcal{B}_{\underline{n},q,\underline{t}}(a) \mathcal{B}_{\underline{m},q,\underline{t}}(a)\right) 
        \end{align*}
        over all positive integers.
        
        To bound Expression \eqref{eq: sigma minus} we again use the bound of $\mathcal{B}$ from Section \ref{s: bounding the moments} to show
        \begin{align*}
            \sum_{\substack{1 \leq a < q}} \left( \mathcal{B}_{\underline{n},q,\underline{t}}(a) \mathcal{B}_{\underline{m},q,\underline{t}}(q-a)\right) &\leq 2^{2n} \sum_{1 \leq a < q} \frac{d_n(a)d_k(a)}{a} \frac{d_n(q-a)d_k(q-a)}{q-a} \\
            &\ll q^\epsilon \sum_{1 \leq a < q} \frac{1}{a(q-a)} \leq q^\epsilon\frac{2 \log q}{q} .
        \end{align*}
        As a result,
        \begin{align*}
            M_q(\underline{n},\underline{n}) &= \frac{1}{\pi^{2n}}\sum_{\substack{1 \leq a \leq q}} \left( \mathcal{B}_{\underline{n},q,\underline{t}}(a) \mathcal{B}_{\underline{m},q,\underline{t}}(a)\right) + O_{n,k}\left(\frac{\log q}{q^{1 - \epsilon}}\right) + O_{n,k}\left(\frac{(\log q^{n-1})^2}{q^{1-\epsilon}}\right) + O \left( \frac{(\log q)^{2n}}{\sqrt{q}} \right). 
        \end{align*}
        This can be simplified, as $\mathcal{B}_{\underline{n},q,\underline{t}}$ is equivalent to $\mathcal{B}_{\underline{n},\underline{t}}$ in this case, and we can combine the errors. Since $k, \underline{n},\underline{m}$ are all fixed, we omit the dependencies on the error for ease of notation. Therefore
        \begin{align}\label{eq: moment with truncated main term}
            M_q(\underline{n},\underline{n}) &= \frac{1}{\pi^{2n}}\sum_{\substack{1 \leq a \leq q}} \left( \mathcal{B}_{\underline{n},\underline{t}}(a) \mathcal{B}_{\underline{m},\underline{t}}(a)\right) + O \left( \frac{(\log q)^{2n}}{\sqrt{q}} \right). 
        \end{align}
        
        The final step is to extend the main sum to infinity. We rewrite the sum
        \begin{align*}
            \sum_{\substack{1 \leq a \leq q}} \left( \mathcal{B}_{\underline{n},\underline{t}}(a) \mathcal{B}_{\underline{m},\underline{t}}(a)\right) = \sum_{\substack{a \geq 1}} \left( \mathcal{B}_{\underline{n},\underline{t}}(a) \mathcal{B}_{\underline{m},\underline{t}}(a)\right) - \sum_{\substack{a > q}} \left( \mathcal{B}_{\underline{n},\underline{t}}(a) \mathcal{B}_{\underline{m},\underline{t}}(a)\right).
        \end{align*}
        By bounding $\mathcal{B}$ as before, the second sum has the upper bound
        \begin{align*}
            \sum_{\substack{a > q}} \left( \mathcal{B}_{\underline{n},\underline{t}}(a) \mathcal{B}_{\underline{m},\underline{t}}(a)\right) \leq 2^{2n} \sum_{\substack{a > q}} \left( \frac{d^2_n(a)d^2_k(a)}{a^2}\right).
        \end{align*}
        We take $d_n(a)^2d_k(a)^2 = O(a^{2\epsilon_{k,n}}) =: O_k(a^\epsilon)$, so
        \begin{align}\label{eq: bounding main sum}
          \sum_{\substack{a > q}} \left( \mathcal{B}_{\underline{n},\underline{t}}(a) \mathcal{B}_{\underline{m},\underline{t}}(a)\right) \ll  \sum_{\substack{a > q}}  \frac{a^\epsilon}{a^2} \ll q^{-1+\epsilon}.
        \end{align}
    This bound is clearly smaller than the error term in Equation \eqref{eq: moment with truncated main term}, so as a result,
    \begin{align*}
        M_q(\underline{n},\underline{n}) &= \frac{1}{\pi^{2n}}\sum_{\substack{a \geq 1}} \left( \mathcal{B}_{\underline{n},\underline{t}}(a) \mathcal{B}_{\underline{m},\underline{t}}(a)\right) + O \left( \frac{(\log q)^{2n}}{\sqrt{q}} \right). 
    \end{align*}
    \end{proof}
    
    To finish proving Proposition \ref{thm: moment convergence} we prove Proposition \ref{thm: prop expectation to sum}, showing how the expectation also equals the sum
    \begin{align*}
        \frac{1}{\pi^{2n}}\sum_{\substack{a \geq 1}} \left( \mathcal{B}_{\underline{n},\underline{t}}(a) \mathcal{B}_{\underline{m},\underline{t}}(a)\right).
    \end{align*}
    
    \begin{proof}[Proof of Proposition \ref{thm: prop expectation to sum}]
    
    We are interested in the expectation
    \begin{align*}
       M(\underline{n},\underline{m}) =  \mathbb{E}\left( \prod_{i=1}^k F(t_i)^{n_i} \overline{F(t_i)}^{m_i}\right).
    \end{align*}
    Using Equation \eqref{eq: limiting moment} from Section \ref{s: defn of moments}, and
        $n\coloneqq n_1 + \cdots n_k = m_1 + \cdots + m_k$, the moment is equivalent to
    \begin{align*}
        M(\underline{n},\underline{m}) = \mathbb{E}\left( \frac{\eta^{n}\overline{\eta}^n}{\pi^{2n}}\right) \sum_{a \geq 1} \mathcal{B}_{\underline{n},\underline{t}}(a) \mathcal{B}_{\underline{m},\underline{t}}(a).
    \end{align*}

   Therefore we have
   \begin{align*}
       \mathbb{E}\left( \prod_{i=1}^k F_-(t_i)^{n_i} \overline{F_-(t_i)}^{m_i}\right) = \frac{1}{\pi^{2n}}  \sum_{\substack{a \geq 1}}\mathcal{B}_{\underline{n},\underline{t}}(a) \mathcal{B}_{\underline{m},\underline{t}}(a),
   \end{align*}
   proving Proposition \ref{thm: prop expectation to sum}.
   \end{proof}
    
    Therefore we have shown the multivariate moment sequence 
    \begin{align*}
        M_q(\underline{n},\underline{m}) = \frac{2}{\phi(q)} \sum_{\chi \odd} \prod_{i=1}^k f_\chi(t_i)^{n_i} \overline{f_\chi(t_i)}^{m_i}
    \end{align*}
    converges, as $q \rightarrow \infty$ through the primes, to 
    \begin{align*}
        \mathbb{E}\left( \prod_{i=1}^k F(t_i)^{n_i} \overline{F(t_i)}^{m_i} \right),
    \end{align*}
    for all $k$-tuples $\underline{n},\underline{m}$ and $0 \leq t_1 < \cdots < t_k \leq 1$. This section only addressed the odd character case, but the proof is similar for even characters and leads to the same results. Therefore $\left(\mathcal{F}_{q,\pm}\right)_{q \text{ prime}}$, the distribution of odd/even character paths $f_\chi$ modulo $q$, converges to $F_\pm$ as $q \rightarrow \infty$ in the sense of convergence in finite distributions.

\section{Relative Compactness of the Sequence of Distributions}\label{s: tightness}
    
    In the previous section we showed $(\mathcal{F}_q)$ converges in finite distributions to the process $F$ as $q \rightarrow \infty$ through the primes. If we can prove the sequence of distributions is relatively compact, then it follows that $(\mathcal{F}_q)$ converges in distribution to $F$ \cite[Example 5.1]{billingsley2013}. This is much stronger than convergence of finite-dimensional distributions and concludes the proof of Theorem \ref{thm: main theorem}.

    Prohorov's Theorem \cite[Theorem 5.1]{billingsley2013} states that if a sequence of probability measures is tight, then it must be relatively compact. For this we use Kolmorogorov's tightness criterion, quoted from Revuz and Yor:
    
    \begin{proposition}\cite[Th. XIII.1.8]{revuzyor2013}\label{thm: kolmorogorov tightness criterion} Let $(L_p(t))_{t \in [0,1]}$ be a sequence of $C([0,1])-$valued processes such that $L_p(0)=0$ for all $p$. If there exists constants $\alpha > 0$, $\delta > 0$ and $C \geq 0$ such that for any $p$ and any $s < t$ in $[0,1]$, and we have
    \begin{align*}
        \mathbb{E}(|L_p(t) - L_p(s)|^\alpha) \leq C|t-s|^{1 + \delta},
    \end{align*}
    then the sequence $(L_p(t))$ is tight.
    \end{proposition}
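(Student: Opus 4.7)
The plan is to exploit the Arzel\`a--Ascoli characterization of compact subsets of $C([0,1])$: a set is relatively compact precisely when it is uniformly bounded and equicontinuous. Since $L_p(0)=0$, uniform boundedness on a high-probability event reduces to controlling the modulus of continuity $\omega_L(\delta):=\sup_{|t-s|<\delta}|L(t)-L(s)|$. Thus, by Prohorov's theorem (recalled just above), it suffices to prove the quantitative statement: for every $\eta,\epsilon>0$ there exists $\delta_0>0$ with $\sup_p \mathbb{P}(\omega_{L_p}(\delta_0)>\eta)<\epsilon$. The compacts witnessing tightness will then be sets of the form $\{f\in C([0,1]):f(0)=0,\ \omega_f(\delta_n)\leq \eta_n\ \forall n\}$ for suitable sequences $\delta_n,\eta_n\downarrow 0$.

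The next step converts the moment hypothesis into tail bounds on a dyadic grid. Chebyshev's inequality gives, for any $s<t$ in $[0,1]$ and any $\lambda>0$,
\begin{equation*}
\mathbb{P}(|L_p(t)-L_p(s)|>\lambda)\leq C\,\lambda^{-\alpha}\,|t-s|^{1+\delta}.
\end{equation*}
Fix any exponent $\beta\in(0,\delta/\alpha)$ and, for each $n\geq 1$, apply this with $\lambda=2^{-n\beta}$ together with a union bound over the $2^n$ consecutive level-$n$ dyadic increments to obtain
\begin{equation*}
\mathbb{P}\Bigl(\max_{1\leq k\leq 2^n}\bigl|L_p(k/2^n)-L_p((k-1)/2^n)\bigr|>2^{-n\beta}\Bigr)\ll 2^{-n(\delta-\alpha\beta)},
\end{equation*}
with an implied constant independent of $p$; the right-hand side is summable in $n$ by the choice of $\beta$.

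The heart of the argument is a standard dyadic chaining step. Given $s<t$ with $2^{-n_0-1}\leq|t-s|<2^{-n_0}$, one writes the increment $L_p(t)-L_p(s)$ as a telescoping sum of dyadic increments at levels $\geq n_0$; on the event where every level-$n$ maximum is bounded by $2^{-n\beta}$, the geometric series $\sum_{n\geq n_0}2^{-n\beta}$ yields $|L_p(t)-L_p(s)|\leq K|t-s|^\beta$ for an absolute constant $K$. Choosing $\delta_0$ so that $K\delta_0^\beta<\eta$ and so that the tail of the summable series above, starting from $n_0(\delta_0)$, is smaller than $\epsilon$, gives the uniform modulus-of-continuity estimate, and hence tightness.

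The main technical obstacle is the chaining bookkeeping: a single parameter $\beta$ must simultaneously make the deterministic bound $K|t-s|^\beta$ small and keep the exceptional-event probability uniformly small in $p$. This is why $\beta$ is forced to lie strictly between $0$ and $\delta/\alpha$, and why the hypothesis that the constants $\alpha,\delta,C$ do not depend on $p$ is essential; once these balances are made, the verification of tightness for $\mathcal{F}_{q,\pm}$ in the sequel reduces to checking the single moment inequality of the criterion.
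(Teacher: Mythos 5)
The paper does not prove this proposition; it is quoted from Revuz and Yor \cite[Th.\ XIII.1.8]{revuzyor2013} and used as a black box. Your proposal supplies the standard proof of Kolmogorov's tightness criterion, and it is correct: Chebyshev converts the moment hypothesis into $\mathbb{P}(|L_p(t)-L_p(s)|>\lambda)\leq C\lambda^{-\alpha}|t-s|^{1+\delta}$, a union bound over the $2^n$ level-$n$ dyadic increments with $\lambda=2^{-n\beta}$ gives a failure probability $\ll 2^{-n(\delta-\alpha\beta)}$, summable precisely when $0<\beta<\delta/\alpha$, and the chaining step then produces a H\"older-$\beta$ modulus of continuity with constant $K\asymp(1-2^{-\beta})^{-1}$ on the complement of a small exceptional event, uniformly in $p$. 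Combined with the Arzel\`a--Ascoli characterization of relatively compact subsets of $C([0,1])$ (using $L_p(0)=0$ to fix the value at a point), this yields tightness. The only details you elide, both routine, are that the telescoping chain is two-sided (descend from $s$ and from $t$ to their nearest level-$n_0$ dyadic points, then connect across at most one level-$n_0$ increment), and that the passage from dyadic $s,t$ to arbitrary $s,t$ uses the assumed sample-path continuity of $L_p$. Since the paper itself gives no proof, there is nothing to contrast against; your argument is essentially the one in the cited reference.
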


    For our sequence of processes $(\mathcal{F}_q(t))_{t \in [0,1]}$ we have $f_\chi(0) = 0$ for all $q$. We also have the trivial bound
    \begin{align*}
        |f_\chi(t) - f_\chi(s)| \leq \sqrt{q}|t-s|,
    \end{align*}
    leading to 
    \begin{align*}
        \mathbb{E}\big| f_\chi(t) - f_\chi(s) \big|^{2k} \leq q^{k} |t-s|^{2k}.
    \end{align*}
    As a result, for $k > 1$ if we take $|t-s| < \frac{1}{q^{1-\epsilon}}$ for $\epsilon \in (0, \frac{k-1}{2k - 1})$, we have
    \begin{align}\label{eq: tightness for small (t-s)}
        \mathbb{E}\big| f_\chi(t) - f_\chi(s) \big|^{2k} \leq |t-s|^{2k - \frac{k}{1 - \epsilon}} =: |t-s|^{1 + \delta_1},
    \end{align}
    where $\delta_1 \coloneqq \frac{k - 1 + \epsilon(1 - 2k)}{1 - \epsilon}$. Therefore if we show a similar bound for $\mathbb{E}\big| f_\chi(t) - f_\chi(s) \big|^{2k}$ for $|t -s| > \frac{1}{q^{1 - \epsilon}}$ then the tightness condition holds for our sequence of processes. 
    
   Various authors have found results bounding the average of the difference of character sums. For example,
    Cochrane and Zheng \cite{cochranezheng1998} prove for positive integers $k$ and Dirichlet characters modulo prime $q$,
        \begin{align*}
            \frac{1}{q - 1} \sum_{\chi \neq \chi_0} \left| \sum_{n = s + 1}^{s + t} \chi(n) \right|^{2k} \ll_{\epsilon,k} q^{k - 1 + \epsilon} + |t-s|^k q^\epsilon.
        \end{align*}
    To prove tightness however we need the $|t - s|$ term independent of $q$. 
    
    \begin{lemma}\label{thm: tightness for large (t-s)}
    Let $q$ be an odd prime. For all $\epsilon \in (0,1)$, there exists absolute constants $C_1(\epsilon), C_2$ independent of $q$ such that for all $0 \leq s < t \leq 1$,
    \begin{align*}
        \mathbb{E}\big| f_\chi(t) - f_\chi(s) \big|^{4} \leq C_1(\epsilon) |t-s|^{1 + \delta_2} + C_2 \frac{(\log q)^4}{q},
    \end{align*}
    where $\delta_2 \coloneqq 1 - \epsilon$.
    \end{lemma}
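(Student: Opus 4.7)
The plan is to apply the Fourier expansion of $f_\chi$ from Equation \eqref{eq: fourier series of character sum} and compute the fourth moment via character orthogonality together with divisor-sum estimates. I would start by writing
\begin{align*}
    f_\chi(t) - f_\chi(s) = \frac{\tau(\chi)}{2\pi i \sqrt{q}} \sum_{0<|k|<q} \frac{\overline{\chi}(k)}{k}\, g(k) + O\left(\frac{\log q}{\sqrt{q}}\right),
\end{align*}
where $g(k) := e(-ks) - e(-kt)$ satisfies $|g(k)| \leq \min(2, 2\pi|k|(t-s))$. Using $|a+b|^4 \leq 8(|a|^4 + |b|^4)$, the contribution of the $O(\log q/\sqrt{q})$ error to $\mathbb{E}|f_\chi(t)-f_\chi(s)|^4$ is $O((\log q)^4/q^2)$, which fits inside the $C_2(\log q)^4/q$ summand.

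For the main term, since $|\tau(\chi)|^2 = q$ for every non-principal $\chi$ mod prime $q$ and since $|k_i| < q$ forces $\gcd(k_i, q) = 1$, character orthogonality yields
\begin{align*}
    \frac{1}{\phi(q)}\sum_\chi \left|\frac{\tau(\chi)}{2\pi i \sqrt q}\sum_k\cdots\right|^4 = \frac{1}{(2\pi)^4} \sum_{\substack{0<|k_i|<q \\ k_1 k_3 \equiv k_2 k_4 \!\!\pmod q}} \frac{g(k_1)g(k_3)\overline{g(k_2)g(k_4)}}{k_1 k_2 k_3 k_4}.
\end{align*}
I would split the right-hand side into a diagonal piece $D$ (where $k_1 k_3 = k_2 k_4$ as integers) and an off-diagonal piece $O$.

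For the diagonal, I would write $D = \sum_{N \neq 0}|c_N|^2/N^2$ with $c_N := \sum_{k_1 k_3 = N,\, |k_i|<q} g(k_1)g(k_3)$, and combine the trivial bound $|g(k_1)g(k_3)| \leq 4$ with the linearised bound $|g(k_1)g(k_3)| \leq 4\pi\min(|k_1|,|k_3|)(t-s) \leq 4\pi\sqrt{|N|}(t-s)$ to get $|c_N| \ll d(|N|)\min(\sqrt{|N|}(t-s), 1)$. Splitting the $N$-sum at $|N| \sim (t-s)^{-2}$ and invoking the standard estimates $\sum_{|N|\leq X}d(N)^2/|N| \ll (\log X)^4$ and $\sum_{|N|>X}d(N)^2/N^2 \ll (\log X)^3/X$ then gives $D \ll (t-s)^2 \log^4(1/(t-s)) \ll_\epsilon (t-s)^{2-\epsilon}$, supplying the $C_1(\epsilon)|t-s|^{1+\delta_2}$ term.

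The main obstacle is the off-diagonal piece $O$. The crucial observation is that $k_1 k_3 \equiv k_2 k_4 \pmod q$ with $k_1 k_3 \neq k_2 k_4$ forces $|k_1 k_3 - k_2 k_4| \geq q$, hence $\max_i |k_i| \geq \sqrt{q/2}$. Splitting on which index attains this bound, the congruence pins that $k_i$ to at most two values modulo $q$, contributing $|g(k_i)|/|k_i| \ll 1/\sqrt{q}$, while each of the remaining three indices contributes at most $\sum_{0<|k|<q}|g(k)|/|k| \ll \log q$. A direct implementation gives $|O| \ll (\log q)^3/\sqrt{q}$; the remaining technical work is to strengthen the observation — for instance by showing that the constraint $|k_1 k_3 - k_2 k_4| \geq q$ generically forces a \emph{second} index to be large — so as to extract an additional factor of $1/\sqrt{q}$ and arrive at the claimed $(\log q)^c/q$ bound.
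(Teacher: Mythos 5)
Your overall strategy — expand $f_\chi$ via its Fourier series, compute the fourth moment by character orthogonality, and split the resulting four-fold sum into the integer diagonal $k_1k_3 = k_2k_4$ and an off-diagonal remainder — is structurally the same as the paper's, and your treatment of the diagonal piece (packaging $c_N = \sum_{k_1k_3=N}g(k_1)g(k_3)$, using $|c_N|\ll d(|N|)\min(1,\sqrt{|N|}|t-s|)$, splitting at $|N|\asymp (t-s)^{-2}$, and invoking Rankin-style divisor estimates) is correct and essentially reproduces the paper's use of $\sum d(n)^2 n^{-s}=\zeta(s)^4/\zeta(2s)$.

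The genuine gap is the off-diagonal, and you yourself flag it: $(\log q)^3/\sqrt q$ is not $(\log q)^c/q$, and the difference is fatal to the tightness argument. Section~4 needs Lemma~\ref{thm: tightness for large (t-s)}, combined with the trivial bound \eqref{eq: tightness for small (t-s)}, to cover all $|t-s|$. Equation~\eqref{eq: tightness for small (t-s)} with $\alpha=4$ only reaches up to $|t-s|< q^{-(1-\epsilon_1)}$ with $\epsilon_1<1/3$, i.e.\ at best $|t-s|< q^{-2/3+}$, while a bound $C_1|t-s|^{1+\delta_2}+C_2(\log q)^3/\sqrt q$ only covers $|t-s|\gtrsim q^{-1/(2(1+\delta_2))+o(1)}\geq q^{-1/4+o(1)}$; these ranges cannot be made to overlap for any admissible $\epsilon_1,\delta_2$. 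Your proposed repair — that $|k_1k_3-k_2k_4|\geq q$ should ``generically force a second index to be large'' — cannot be made to work, because there are off-diagonal terms with exactly one large index: e.g.\ $k_1=q-1$, $k_3=1$, $k_2=1$, $k_4=-1$ satisfies $k_1k_3\equiv k_2k_4\pmod q$ with $k_1k_3\neq k_2k_4$ and only $|k_1|$ is large. What actually rescues the estimate, and what the paper does, is to apply the \emph{same} $c_N$-repackaging to the off-diagonal that you already use for the diagonal: writing $b(n)=\sum_{n_1n_2=n,\,n_i\leq q}\cdots$ and using $\frac{1}{\phi(q)}\sum_\chi|\sum_n\overline\chi(n)b(n)|^2 = \sum_{a=1}^q|\sum_{m\geq 0}b(a+mq)|^2$, the paper bounds the tail $\mathcal D_a := \sum_{m\geq 1}d(a+mq)/(a+mq)$ \emph{before} squaring, via Shiu's Brun--Titchmarsh theorem for the divisor function in arithmetic progressions, obtaining $\mathcal D_a \ll (\log q)^2/q$ uniformly and hence $\sum_a \mathcal D_a^2 \ll (\log q)^4/q$. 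Squaring a good uniform bound on the inner sum over the progression is what produces the extra factor of $1/\sqrt q$ that your term-by-term estimate over $k_2,k_3,k_4$ throws away. Without this step (or some substitute controlling the divisor function averaged over an arithmetic progression mod $q$), the off-diagonal cannot be pushed below $q^{-1/2}$ and the lemma as stated is not established.
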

    
    This lemma can be applied to characters of all moduli, not just primes, but for our work it is sufficient to look only at primitive characters. Clearly if $|t-s| \geq \frac{(\log q)^4}{q}$ then the equation becomes 
    \begin{align*}
        \mathbb{E}\big| f_\chi(t) - f_\chi(s) \big|^{4} \leq C |t-s|^{1 + \delta},
    \end{align*}
    which, combined with Equation \eqref{eq: tightness for small (t-s)} above, proves the sequence $(\mathcal{F}_q)$ is tight for all $s,t \in [0,1]$.
    
    Lemma \ref{thm: tightness for large (t-s)} is similar to a result of Bober and Goldmakher \cite[Lemma 4.1]{bobergoldmakher2013} and we use parts of their work in the proof. Unlike Section \ref{s: method of moments}, we will consider the odd and even case at the same time.
    
    \begin{proof}[Proof of Lemma \ref{thm: tightness for large (t-s)}]
        Using the Fourier expansion of $f_\chi$, the difference $(f_\chi(t) - f_\chi(s))$ can be written as
        \begin{align*}
            \frac{\tau(\chi)}{2 \pi i \sqrt{q}} \sum_{1 \leq |n| \leq q} \frac{\overline{\chi}(n)}{n}e(-sn) \left( 1 - e(-(t-s)n)\right) + O\left( \frac{\log q}{\sqrt{q}} \right).
        \end{align*}
        Consequently,
        \begin{align*}
            \big| f_\chi(t) - f_\chi(s) \big|^4 \leq \frac{2^4}{\pi^4} \bigg| \sum_{1 \leq n \leq q} \frac{\overline{\chi}(n)}{n}e(-sn) \left( 1 - e(-(t-s)n)\right) \bigg|^4 + O\left( \frac{(3 + \log q)^4}{q^2}\right).
        \end{align*}
        Similar to Section \ref{s: defn of moments} and \cite[Lemma 4.1]{bobergoldmakher2013}, we define
        \begin{align*}
            b(n) = \left\{ \begin{array}{c|l}\frac{1}{n}\sum_{\substack{n_1 n_2 = n \\ n_i \leq q}} \prod_{j=1}^2 \left( e(-sn_j)\left( 1 - e\left(-(t-s)n_j\right) \right)\right) & (n,q) = 1, \\
            0 & \text{otherwise}.\end{array}\right.
        \end{align*}
        Therefore,
        \begin{align*}
            \bigg|\sum_{1 \leq n \leq q} \frac{\overline{\chi}(n)}{n}e(-sn) \left( 1 - e(-(t-s)n)\right) \bigg|^4 = \bigg| \sum_{1 \leq n \leq q^2} \overline{\chi}(n) b(n)\bigg|^2.
        \end{align*}
        
        The sum $b(n)$ can be bounded using $(1 - e(x)) \leq \min \{ 2, x\}$, so
        \begin{align}\label{eq: bound of b(n)}
            |b(n)| \leq d(n) \min\bigg\{ \frac{2^2}{n}, \left(2\pi(t - s)\right)^2 \bigg\}.
        \end{align}
        As a result, taking $n = a + mq$,
        \begin{align}\label{eq: splitting b(n) and bounding by the divisor function}
            \mathbb{E} \bigg| \sum_{1 \leq n \leq q^2} \overline{\chi}(n) b(n)\bigg|^2 = \sum_{a = 1}^q\bigg| \sum_{m = 0}^{q} b(a + mq)\bigg|^2
            \leq 4\sum_{a = 1}^q |b(a)|^2 + 4 \sum_{a = 1}^q\bigg| 2^2 \sum_{m=1}^{q} \frac{ d(a + mq)}{a + mq}\bigg|^2.
        \end{align}
   
    We are interested in bounding the latter inner sum,
        \begin{align*}
            \mathcal{D}_a \coloneqq \sum_{m=1}^{q} \frac{ d(a + mq)}{a + mq} = \sum_{\substack{q < m\leq (a + q^2) \\ m \equiv a (q)}} \frac{ d(m)}{m}.
        \end{align*}
    By Abel summation this is
    \begin{align*}
        \frac{1}{a + q^2} \sum_{\substack{ q < m \leq (a + q^2) \\ m \equiv a (q)}} d(m) + \int_q^{(a + q^2)} \frac{1}{t^2} \sum_{\substack{m \leq t \\ m \equiv a (q)}} d(a) dt. 
    \end{align*}
    In order to further bound the sum, we use the Shiu's upper bound \cite[Theorem 1]{shiu},
    \begin{align*}
        \sum_{\substack{n \leq x \\ n \equiv a \, (q)}} d(n) \ll_\delta \frac{x \phi(q)\log x}{q^2} < x \cdot \frac{\log x}{q},
    \end{align*}
    which is valid for all $x \geq q^{1 + \delta}$ for any $\delta > 0$. Therefore,
    \begin{align*}
        \mathcal{D}_a &= \frac{1}{a + q^2} \sum_{\substack{q \leq  m \leq (a + q^2) \\ m \equiv a (q)}} d(m) + \int_{q}^{(a + q^2)} \frac{1}{t^2} \sum_{\substack{m \leq t \\ m \equiv a (q)}} d(a) dt \\
        &=O\left( \frac{\log(a + q^2)}{q} \right) + O\left(\int_{q^{1 + \delta}}^{(a + q^2)} \frac{\log t}{q t}  dt\right) + \int_{q}^{q^{1+ \delta}} \frac{1}{t^2} \sum_{\substack{m \leq t \\ m \equiv a (q)}} d(a) dt. 
    \end{align*}
    
    The latter integral is bounded by the following results \cite[Equation 27.11.2]{NIST:DLMF}\cite{fouvry1992}:
    \begin{align*}
        \sum_{m \leq t} d(m) &= t \log t + (2\gamma - 1)t + O(\sqrt{t}) = O(t\log t), \\
        \sum_{\substack{m \leq t \\ m \equiv a (q)}} d(a) &= \frac{1}{\phi(q)} \sum_{\substack{m \leq t \\ (m,q) = 1}} d(m) + O\left((q^{1/2} + t^{1/3})t^{\epsilon}\right) = O\left( \frac{t \log t}{q} \right).
    \end{align*}
    Combining these equations, we have
    \begin{align*}
        \mathcal{D}_a = O\left( \frac{\log q}{q} \right) + O_\delta\left(\frac{(\log q)^2}{q} \right) + O_\delta\left( \frac{(\log q)^2}{q}\right).
    \end{align*}
    
    As a result, fixing $\delta > 0$, Equation \eqref{eq: splitting b(n) and bounding by the divisor function} becomes
    \begin{align*}
        \sum_{a = 1}^q\bigg| \sum_{m = 0}^{q} b(a + mq)\bigg|^2
            \leq 4\sum_{a = 1}^q |b(a)|^2 + O\left( \sum_{a = 1}^q \left| \frac{(\log q)^2}{q}\right|^2 \right)
            = 4\sum_{a = 1}^q |b(a)|^2 + O\left( \frac{(\log q)^4}{q} \right).
    \end{align*}
        Therefore the only sum left to evaluate is $\sum_{a \leq q} |b(a)|^2$. Using the bound from Equation \eqref{eq: bound of b(n)} and splitting the cases $\frac{1}{a} > \pi^2(t - s)^2$ and $\frac{1}{a} < \pi^2 (t-s)^2$, we have
        \begin{align*}
            \sum_{a = 1}^q |b(a)|^2 \leq 2^4 \left( \pi^4 (t-s)^4\sum_{a \leq \pi^{-2} (t-s)^{-2}} d(a)^2 + \sum_{\pi^{-2}(t-s)^{-2}< a \leq q} \frac{d(a)^2}{a^2} \right).
        \end{align*}
        We combine the two sums by Rankin's trick. Taking $x = \pi^{-2}(t-s)^{-2}$, 
        \begin{align*}
            \frac{1}{x^2}\sum_{a \leq x} d(a)^2 &\leq \frac{x^{\sigma_1}}{x^2} \sum_{a = 1}^\infty \frac{d(a)^2}{a^{\sigma_1}}, &1 < \sigma_1 < 2, \\
            \sum_{a \geq x} \frac{d(a)^2}{a^2} &\leq \frac{1}{x^{\sigma_2}} \sum_{a = 1}^\infty \frac{d(a)^2}{a^{2 - \sigma_2}}, &0 < \sigma_2 < 1.
        \end{align*}
        Note $\sigma_1,\sigma_2$ are bounded so that the sums converge and tend to zero as $x \rightarrow \infty$.
        These sums are addressed by one of Ramanujan's identities \cite[Section 1.3.1, Question 5]{montgomeryvaughan2006}. For $Re(s) > 1$, $$\sum_{n =1}^\infty \frac{d(n)^2}{n^{s}} = \frac{\zeta(s)^4}{\zeta(2s)}.$$  
        Therefore
        \begin{align*}
             \sum_{a = 1}^q |b(a)|^2 \leq 2^4 \left(\frac{1}{x^{2 - \sigma_1}} \frac{\zeta(\sigma_1)^4}{\zeta(2\sigma_1)} + \frac{1}{x^{\sigma_2}}\frac{\zeta(2 - \sigma_2)^4}{\zeta(2(2 - \sigma_2))}\right).
        \end{align*}
        Taking $\sigma \coloneqq \min(2 - \sigma_1, \sigma_2) \in (0,1)$ and substituting back $\pi^{-2} (t-s)^{-2} = x$,
        \begin{align*}
            \frac{1}{x^{2 - \sigma_1}} \frac{\zeta(\sigma_1)^4}{\zeta(2\sigma_1)} + \frac{1}{x^{\sigma_2}}\frac{\zeta(2 - \sigma_2)^4}{\zeta(2(2 - \sigma_2))}
            \leq \frac{C}{x^{\sigma}} = C\pi^{2\sigma} (t-s)^{2\sigma},
        \end{align*}
        for some $C = C(\sigma) > 0$. As a result,
        \begin{align*}
            \mathbb{E}\big| f_\chi(t) - f_\chi(s)\big|^4 \leq C (t-s)^{2\sigma} + O\left( \frac{(\log q)^4}{q}\right).
        \end{align*}
       Taking $\sigma = 1 - \epsilon$ and therefore $2\sigma = 2 - \epsilon$, we have completed the proof. 
    \end{proof}
    
    Lemma \ref{thm: tightness for large (t-s)} shows that the Kolmorogorov's tightness criterion argument holds for 
    \begin{align*}
        |t-s|^{1 + \delta_2} \gg \frac{(\log q)^4}{q},
    \end{align*}
    where we take $\alpha = 4$ from Proposition \ref{thm: kolmorogorov tightness criterion}.
    Therefore, combining with Equation \eqref{eq: tightness for small (t-s)}, we have shown for constant $K$ 
    \begin{align*}
        \mathbb{E}\big| f_\chi(t) - f_\chi(s)\big|^4 \leq \left\{ \begin{array}{cl}
            |t-s|^{1 + \delta_1}, & |t-s| \leq q^{-(1 - \epsilon_1)} \\
            K |t-s|^{1 + \delta_2}, & |t-s| \geq (\log q)^{4}q^{-1}
        \end{array}\right.,
    \end{align*}
    where we can choose $\delta_1$ and $\delta_2$ in such a way that
    for $\delta_1 = \frac{1 - 3\epsilon_1}{1 - \epsilon_1}$ for $\epsilon_1 \in (0,\frac{1}{3})$ and and $\delta_2 = 1 - \epsilon_2$ where  $\epsilon_2 \in (0,1)$. This is possible as our initial parameter choices are flexible enough to allow this.
    
    For large enough $q$, we have 
    \begin{align*}
            \frac{(\log q)^4}{q}< \frac{1}{q^{1 - \epsilon_1}}.
    \end{align*}
   Therefore taking $\delta \coloneqq \min(\delta_1,\delta_2)$, Kolmorogorov's tightness criterion holds for all $t,s$ and $(\mathcal{F}_q)$ is tight. As a result, $(\mathcal{F}_q)_{q \text{ prime}}$ converges in distribution to the random process $F$ as $q \rightarrow \infty$, proving Theorem \ref{thm: main theorem}.
    This concurs with the result from Bober, Goldmakher, Granville and Koukoulopoulos for their distribution function 
    \begin{align*}
    \Phi_q(\tau) \coloneqq \frac{1}{\phi(q)} \# \bigg \{ \chi \mod q : \max_t |S_\chi(t)| > \frac{e^\gamma}{\pi}\tau \bigg \}
    \end{align*}
    weakly converging to their limiting function \cite[Theorem 1.4]{bggk2014}
    \begin{align*}
        \Phi(\tau) \coloneqq \prob\left( \max_t |F(t)| > 2 e^\gamma \tau \right).
    \end{align*}

    \bibliography{bibliography1}
    \bibliographystyle{plain}
    
    \Addresses
\end{document}